\theoremstyle{theorem}
\newtheorem{theorem}{Theorem}[section]
\newtheorem{remark}{Remark}[section]
\newtheorem{lemma}{Lemma}[section]
\newtheorem{example}{Example}[section]
\newcommand{\lp}{\reflectbox{$'$}}
\numberwithin{equation}{section}
\DeclareMathOperator*{\argmin}{argmin}
\begin{document}

\title{Collective Annealing by Switching Temperatures:\\ a Boltzmann-type framework}
\author[1,2]{Frédéric Blondeel\thanks{frederic.blondeel@unife.it}}
\author[2,3]{Lorenzo Pareschi\thanks{l.pareschi@hw.ac.uk}}
\author[1]{Giovanni Samaey\thanks{giovanni.samaey@kuleuven.be}}

\affil[1]{Department of Computer Science, KU Leuven, Belgium}
\affil[2]{Department of Mathematics and Computer Science University of Ferrara, Italy}
\affil[3]{Heriot-Watt University, Edinburgh, UK}
\date{}

\maketitle

\begin{abstract}
The design of effective cooling strategies is a crucial component in simulated annealing algorithms based on the Metropolis method. Traditionally, this is achieved through inverse logarithmic decays of the temperature to ensure convergence to global minima. In this work, we propose Collective Annealing by Switching Temperatures ({CAST}), a novel collective simulated annealing dynamic in which agents interact to learn an adaptive cooling schedule. Inspired by the particle-swapping mechanism of parallel tempering, we introduce a Boltzmann-type framework in which particles exchange temperatures through stochastic binary interactions. Under suitable conditions on the interaction parameters, this process induces a 
monotone decrease of the expected average temperature in the system.
Numerical results indicate that the proposed approach can improve 
convergence speed over classical simulated annealing on multimodal 
benchmark problems, especially in regimes where adaptive exploration is important.
\end{abstract}

\textbf{Keywords:}  Simulated annealing, parallel tempering, cooling strategies, Boltzmann equation, global optimization

\tableofcontents

\section{Introduction}
Global optimization is inherently difficult, primarily due to two factors.
First, there is no guarantee that a computed solution corresponds to the global minimum rather than a local one, particularly in nonconvex landscapes characterized by multiple attractors.
Second, in many practical settings, gradient information may be unavailable, unreliable, or too costly to evaluate, and the objective function itself may not admit an explicit analytic representation.
These issues make conventional gradient-based schemes, such as stochastic gradient descent and its variants, inapplicable or prone to premature convergence.
In such settings, a popular alternative involves the use of \textit{metaheuristic algorithms}: stochastic, population-based methods designed to explore complex and multimodal energy landscapes. Classical examples include Simulated Annealing (SA)~\cite{kirkpatrick_optimization_1983}, Genetic Algorithms (GA)~\cite{holland_adaptation_1992,goldberg_genetic_1989}, Ant Colony Optimization (ACO)~\cite{dorigo_ant_1996}, Particle Swarm Optimization (PSO)~\cite{kennedy_particle_1995}, and more recently, Consensus-Based Optimization (CBO)~\cite{carrillo_analytical_2018, pinnau_consensus-based_2017}. 
These algorithms do not rely on gradient information and employ sophisticated exploration-exploitation heuristics to guide the system towards a probable global minimum. 

Simulated Annealing was introduced in the early 1980s by Kirkpatrick, Gelatt, and Vecchi~\cite{kirkpatrick_optimization_1983}. Originally designed to solve combinatorial optimization problems like the Traveling Salesman Problem, SA has since found applications across engineering, machine learning, and physics\cite{gendreau_simulated_2019,rere_simulated_2015}. The method is inspired by the metallurgical annealing process, where a material is slowly cooled to reduce its Gibbs free energy and achieve a low-energy crystalline structure. Computationally, SA implements this idea via a Metropolis--Hastings Markov Chain Monte Carlo (MCMC) dynamic^^>\cite{hastings_monte_1970,metropolis_equation_1953} combined with a global cooling schedule. At high temperatures, the system accepts both better and worse solutions, enabling wide exploration. As the temperature decreases, the system becomes more selective, favoring low-energy configurations. Under ideal conditions, convergence to the global minimum is guaranteed by an inverse-logarithmic cooling schedule~\cite{hajek_cooling_1988}, although such schedules are often impractical in applications due to slow convergence rates.

Parallel Tempering (PT), also known as Replica Exchange Monte Carlo, was originally developed to enhance MCMC sampling\cite{lelievre_computation_2007, stoltz_longtime_2018, voter_parallel_1998}. PT generalizes SA by introducing multiple replicas of the system, each evolving at a distinct temperature~\cite{geyer_markov_1991, earl_parallel_2005,marinari_simulated_1992}. High-temperature replicas explore the energy landscape more freely, while low-temperature replicas refine the search near local minima. By allowing periodic swaps of configurations between replicas, PT improves mixing and avoids trapping in poor minima.

In traditional parallel tempering, interactions among replicas are performed through Metropolis-type swaps of states or temperatures. 
In this work, we propose Collective Annealing by Switching Temperatures (CAST), a generalization in which temperature exchange is modeled as a stochastic binary interaction between particles. 
In contrast to other collective dynamics, the binary exchange model offers a computationally efficient scheme, with cost scaling linearly in the number of samples due to its localized pairwise interactions. 
Our approach is conceptually related to thermostatting techniques such as the Nosé–Hoover method^^>\cite{nose_unified_1984,hoover_canonical_1985}, which enforce canonical sampling by coupling the system to an auxiliary dynamical variable. 
Inspired by kinetic theory and Boltzmann-type models, our approach introduces gradual, probabilistic temperature exchanges, enabling a collective and adaptive cooling strategy.

We also mention recent extensions of consensus-based optimization to constrained,
bilevel, and multilevel problems, where additional particle populations or
auxiliary variables are introduced to encode the structure of the optimization
task \cite{trillos_cb2o_2025,herty_multiscale_2025}. The present approach
is different in scope: CAST remains a single-level global optimization method, but
it augments the particle system with a dynamically evolving temperature variable.
In this sense, it is close in spirit to multiscale particle methods in which the
optimization dynamics is coupled to an auxiliary adaptive mechanism.

A common drawback of many metaheuristic algorithms is the lack of a rigorous theoretical foundation, as they are typically formulated only at the particle level. To investigate the theoretical properties of our method, we employ tools from classical kinetic theory to characterize the behavior of the system in the limit of a large number of particles^^>\cite{pareschi_optimization_2024, borghi_kinetic_2024, pareschi_interacting_2014,albi_vehicular_2019}. We refer also to the recent work^^>\cite{herty_kinetic_2026} on kinetic formulations of simulated annealing with entropy-based cooling strategies. 
We derive a kinetic formulation that provides a useful mesoscopic description of 
the collective dynamics and allows us to establish structural properties such as 
mass preservation, positivity under suitable conditions, and moment identities for 
the temperature distribution. Specifically, we describe the evolution of the joint probability density $f(x,T,t)$ of particles with position $x$ and temperature $T$ at time $t$ through an equation of the form
\begin{equation}
\label{eq:SAPTstrong}
\frac{\partial f}{\partial t} = \mathcal{L}(f) + Q(f,f),
\end{equation}
where $\mathcal{L}(f)$ is a linear operator governing spatial exploration based on a Metropolis search algorithm, and $Q(f,f)$ is a Boltzmann-type  operator modeling stochastic temperature interactions. Let us mention that Boltzmann-type operators in global optimization have been proposed also in \cite{benfenati_binary_2022,albi_kinetic_2023,borghi_kinetic_2025}.
The detailed expressions for $\mathcal{L}(f)$ and $Q(f,f)$ will be developed in the following sections. 

In particular, Section \ref{sec:cast} introduces the novel method, by first considering the algorithm at the particle-level. We introduce the Metropolis search operator and particle interaction dynamics. Next, we switch to the kinetic interpretation of the algorithm. 
A formal analysis is then carried out in Section \ref{sec:analysis}, where we derive evolution 
equations for suitable marginals of the probability density function, establish 
temperature moment identities, and discuss formal asymptotic mean-field  
limits. Finally, in Section \ref{sec:impl} and \ref{sec:num}, we give some implementation details and choices in order to present a series of numerical experiments that validate the proposed method and compare its performance with classical Simulated Annealing on a set of benchmark problems. We conclude with a section outlining key findings and directions for future research.

\section{Collective Annealing by Switching Temperatures}
\label{sec:cast}
In this section, we describe the proposed algorithm. We start with the particle level description and then give the corresponding kinetic formulation of the collective behaviour.
\subsection{The stochastic particle based algorithm}
\label{sec:particle}
Let us consider the general problem of computing the global minimum
\begin{equation}
x^*=\argmin_{x\in\mathcal{D}}\mathcal{F}(x),
\end{equation}
where $\mathcal{D}\subseteq \mathbb{R}^d$, $d\geq 1$, and $\mathcal{F}(x):\mathcal{D}\rightarrow\mathbb{R}$ is a continuous function
which admits one
global minimizer. We denote
this minimizer as $x^*$.

We start by presenting the proposed algorithm on the particle level. We initialize CAST as follows: 
\begin{enumerate}
\item A set of $N$ particles $X_1^0,\ldots,X_N^0$ is randomly placed on the search space.
\item Each particle has its own {nonnegative} temperature $T_1^0,\ldots,T_N^0$.
\end{enumerate}
Then, in each time step $n$, CAST performs two actions:
\begin{itemize}
\item \textbf{Exploration}. This is essentially fixed-temperature multi-particle Simulated Annealing and will be discussed in section \ref{subsec:particle_Lf}. This differs from classical SA, where a single particle is cooled by a fixed schedule.
\item \textbf{Particle interaction}. The particles are allowed to exchange temperatures, allowing for a temperature decay. This dynamic will be discussed in section \ref{subsec:particle_Qff}. This differs from classical PT, where particles fully swap temperatures.
\end{itemize}

\subsubsection{Exploring the search space}
\label{subsec:particle_Lf}
The exploration phase of the algorithm goes as follows:
\begin{enumerate}
\item Generate a candidate move as
\begin{equation}
\label{eq:SAcandidate}
\tilde{X_i}^{n+1}=X_i^n+\eta(T_i^n)\xi_i^n,\quad i=1,\ldots,N,
\end{equation}
where $\xi_i^n\sim p(\xi, T^n_i)$, $\eta(T_i^n)$ is a function of the temperature and $p(\xi, T^n_i)$ is some probability density (possibly) depending on the temperature. We assume the probability density to be symmetric and centered in zero.
\item Check whether the candidate is in a better position than before. If so, the move is accepted. If not, it can still be accepted in accordance to a temperature dependent Boltzmann--Gibbs probability. The move is rejected otherwise.
\begin{equation}
\label{eq:SAacceptreject}
\begin{dcases}
\text{Accepted}& \text{if }\mathcal{F}(\tilde{X}_i^{n+1})<\mathcal{F}(X_i^n),\\
\text{Accepted with prob. } \exp\Bigg(-\frac{\mathcal{F}(\tilde{X}_i^{n+1})-\mathcal{F}(X_i^n)}{T_i^n}\Bigg)& \text{if }\mathcal{F}(\tilde{X}_i^{n+1})\geq\mathcal{F}(X_i^n),\\
\text{Rejected otherwise.}
\end{dcases}
\end{equation}
\end{enumerate}

\begin{remark}[Step noise]
\label{rem:noise}
Many options exist in the choice of $\eta(T)$ and $p(\xi, T)$. A classical choice is $p(\xi,T)\sim\mathcal{N}(0,1)$ and $\eta(T)=\sqrt{2T}$ so that the temperature is related to the variance of the random variable $\eta(T)\xi$. The finite mean and variance allows for a passage to Langevin dynamics characterized by a Fokker-Planck equation^^>\cite{pareschi_optimization_2024, chizat_mean-field_2022}.
Another option, as shown in^^>\cite{szu_fast_1987, xavier-de-souza_coupled_2010}, is Cauchy noise, i.e., \[p(\xi, T)=\frac{1}{\pi}\frac{T}{T^2+\xi^2}\] with $\eta(T)=1$. Although Cauchy noise has an undefined mean and variance, meaning we cannot perform a mean-field scaling as shown later, the fat Lorentzian tails enable larger jumps which permit to outperform the classical SA algorithm. It is important to underline that our kinetic formulation remains valid also under this choice of noise. For this reason, we will mostly focus on Cauchy noise in our numerical examples against test functions in Section \ref{sec:num}. We mention here that the positive effects of larger jumps has been observed also in other recently developed collective particle search algorithms^^>\cite{kalise_consensus-based_2023,borghi_swarm-based_2025}.
\end{remark}

\begin{remark}[Classical Simulated Annealing]
What we have set forth until now is essentially multi-particle SA without cooling. Our temperature dynamic will be introduced in the sequel. But to give some context, in classical multi-particle SA all particles have the same temperature. This means that the temperature of each particle is reduced at the end of each move in the search space, whether it was accepted or not. The temperature is reduced according to some predefined rule of the form
\begin{equation}
T^n_i=\theta_n T^0_i, \quad\theta\in(0,1),\quad \forall i,
\end{equation}
where the initial temperatures $T^0_i$ are strictly positive. A classical choice is
\begin{equation}
\label{eq:temp}
\theta_n=\frac{1}{\ln(n+e)},
\end{equation}
where the algorithm is proved to converge^^>\cite{hajek_cooling_1988,pareschi_optimization_2024}. However, in more practical applications, a geometric reduction where 
\begin{equation}
\theta_n=(\alpha)^n
\end{equation} 
with $\alpha \in (0,1)$ has shown good potential.
\end{remark}

\subsubsection{Non-local binary interactions and temperature exchanges}
\label{subsec:particle_Qff}
Our particle interaction dynamic is a generalization of classical parallel tempering^^>\cite{earl_parallel_2005}. This is achieved by introducing an alignment process and adding stochasticity. 

Grouping the particles pairwise, and for all $N/2$ distinct pairs of particles $(i,j)$ without replacement, we set $x\coloneqq X^n_i$,$x_*\coloneqq X^n_j$, $T\coloneqq T^n_i$ and $T_*\coloneqq T^n_j$
Then, we apply the interaction described below and set $T^{n+1}_i\eqqcolon T'$ and $T^{n+1}_j\eqqcolon T'_*$.

The non-local temperature interaction between a pair of particles $(x,T)$ and $(x_*,T_*)$ yielding $(x,T')$ and $(x_*,T'_*)$ is described as 
\begin{equation}
\label{eq:PT_trans}
\begin{split}
T' &= T-\lambda(T-T_*)\chi_\mathcal{F}(x,x_*,T,T_*)-\mu(T-T_*)\chi_\mathcal{F}(x_*,x,T_*,T)+\sigma\zeta\\
T_*' &= T_*-\lambda(T_*-T)\chi_\mathcal{F}(x_*,x,T_*,T)-\mu(T_*-T)\chi_\mathcal{F}(x,x_*,T,T_*)+\sigma_*\zeta_*,
\end{split}
\end{equation}
where $\lambda,\mu\in [0,1]$, $\sigma = \sigma(x,x_*,T,T_*)$, $\sigma_* = \sigma(x_*,x,T_*,T)$ define the intensity of the noise characterized by the random variables $\zeta$, $\zeta_*$ assumed symmetric with mean zero and where
\begin{equation*}
\chi_\mathcal{F}(x,x_*,T,T_*)=\Psi\big(\mathcal{F}(x)<\mathcal{F}(x_*)\big)\Psi(T_*<T),
\end{equation*}
is the interaction-indicator function with $\Psi(\cdot)$ the classical indicator function. 
Essentially, the interaction-indicator will drive the dynamic to promote exploration for poorly performing particles and exploitation for well-performing ones. We illustrate with an example.
\begin{example}
\label{ex:viewpoint}
We consider two particles, each with their respective temperatures, i.e., $(x,T)$ and $(x_*,T_*)$. Due to symmetry, it suffices to only look at terms with $\chi_\mathcal{F}(x,x_*,T,T_*)$. For now, we ignore the effect of noise. For interaction to occur, the interaction-indicator gives two conditions we consider to be satisfied, namely $\mathcal{F}(x)<\mathcal{F}(x_*)$ and $T_*<T$. We get
\begin{align*}
T' &=T-\lambda(T-T_*) \ =T-\mu T+\lambda T_* - (\lambda-\mu) T\\
T_*'&=\underbrace{T_*+\mu(T-T_*)}_{VP1}=\underbrace{T_*+\mu T -\lambda T_* +(\lambda - \mu)T_*}_{VP2}.
\end{align*}
We discuss two viewpoints:
\begin{enumerate}[leftmargin=*, align=left]
\item[$VP1$.] \textbf{The warming/cooling viewpoint.} \ The well-performing particle cools down by a factor $\lambda$ of the difference of temperatures. The poorly performing particle warms up by a factor $\mu$ of the difference of temperatures.
\item [$VP2.$]  \textbf{The exchange viewpoint.} \ The particles exchange parts of their temperatures to each other and gain or lose some from the ``environment". Figure \ref{fig:vis} illustrates this viewpoint.

\begin{figure}[htb]

\centering

\scalebox{0.9}{
\tikzset{every picture/.style={line width=0.75pt}} 

\begin{tikzpicture}[x=0.75pt,y=0.75pt,yscale=-1,xscale=1]

%Shape: Axis 2D [id:dp4856834703410665] 
\draw  (103,254.28) -- (533,254.28)(146,25) -- (146,279.75) (526,249.28) -- (533,254.28) -- (526,259.27) (141,32) -- (146,25) -- (151,32)  ;
%Curve Lines [id:da5554247113009481] 
\draw    (146,131.33) .. controls (158.44,102) and (181.78,35.33) .. (226.44,140) .. controls (271.11,244.67) and (302.17,255.08) .. (338.17,151.08) .. controls (374.17,47.08) and (396.44,189.33) .. (427.11,142) .. controls (457.78,94.67) and (481.11,102) .. (493.11,102.67) ;
%Shape: Circle [id:dp6613687245366244] 
\draw  [color={rgb, 255:red, 0; green, 0; blue, 0 }  ,draw opacity=1 ][fill={rgb, 255:red, 0; green, 0; blue, 0 }  ,fill opacity=1 ] (287.67,223.92) .. controls (287.67,222.95) and (288.45,222.17) .. (289.42,222.17) .. controls (290.38,222.17) and (291.17,222.95) .. (291.17,223.92) .. controls (291.17,224.88) and (290.38,225.67) .. (289.42,225.67) .. controls (288.45,225.67) and (287.67,224.88) .. (287.67,223.92) -- cycle ;
%Shape: Circle [id:dp45001148467078633] 
\draw  [color={rgb, 255:red, 0; green, 0; blue, 0 }  ,draw opacity=1 ][fill={rgb, 255:red, 0; green, 0; blue, 0 }  ,fill opacity=1 ] (413.67,151.92) .. controls (413.67,150.95) and (414.45,150.17) .. (415.42,150.17) .. controls (416.38,150.17) and (417.17,150.95) .. (417.17,151.92) .. controls (417.17,152.88) and (416.38,153.67) .. (415.42,153.67) .. controls (414.45,153.67) and (413.67,152.88) .. (413.67,151.92) -- cycle ;
%Curve Lines [id:da5933647584237086] 
\draw [color={rgb, 255:red, 126; green, 211; blue, 33 }  ,draw opacity=1 ]   (288.17,194.58) .. controls (289.65,63.9) and (378.86,35.15) .. (410.72,127.74) ;
\draw [shift={(411.67,130.58)}, rotate = 252.1] [fill={rgb, 255:red, 126; green, 211; blue, 33 }  ,fill opacity=1 ][line width=0.08]  [draw opacity=0] (8.93,-4.29) -- (0,0) -- (8.93,4.29) -- cycle    ;
%Curve Lines [id:da601867487938035] 
\draw [color={rgb, 255:red, 126; green, 211; blue, 33 }  ,draw opacity=1 ]   (413.17,185.08) .. controls (410.2,198.94) and (392.04,256.91) .. (315.99,228.96) ;
\draw [shift={(313.67,228.08)}, rotate = 21.04] [fill={rgb, 255:red, 126; green, 211; blue, 33 }  ,fill opacity=1 ][line width=0.08]  [draw opacity=0] (8.93,-4.29) -- (0,0) -- (8.93,4.29) -- cycle    ;
%Straight Lines [id:da09212900949171643] 
\draw [color={rgb, 255:red, 208; green, 2; blue, 27 }  ,draw opacity=1 ]   (288.17,194.58) -- (247.22,126.65) ;
\draw [shift={(245.67,124.08)}, rotate = 58.92] [fill={rgb, 255:red, 208; green, 2; blue, 27 }  ,fill opacity=1 ][line width=0.08]  [draw opacity=0] (8.93,-4.29) -- (0,0) -- (8.93,4.29) -- cycle    ;
%Straight Lines [id:da2662142761471392] 
\draw [color={rgb, 255:red, 208; green, 2; blue, 27 }  ,draw opacity=1 ]   (413.17,185.08) -- (449.78,230.25) ;
\draw [shift={(411.17,183.08)}, rotate = 50.97] [fill={rgb, 255:red, 208; green, 2; blue, 27 }  ,fill opacity=1 ][line width=0.08]  [draw opacity=0] (8.93,-4.29) -- (0,0) -- (8.93,4.29) -- cycle    ;

% Text Node
\draw (270.33,226.73) node [anchor=north west][inner sep=0.75pt]    {$( x,T)$};
% Text Node
\draw (389.83,159.73) node [anchor=north west][inner sep=0.75pt]    {$( x_{*} ,T_{*})$};
% Text Node
\draw (105,22.4) node [anchor=north west][inner sep=0.75pt]    {$\mathcal{F}( x)$};
% Text Node
\draw (502.5,258.9) node [anchor=north west][inner sep=0.75pt]    {$x$};
% Text Node
\draw (335.67,53.9) node [anchor=north west][inner sep=0.75pt]    {$\mu T$};
% Text Node
\draw (378.17,231.9) node [anchor=north west][inner sep=0.75pt]    {$\lambda T_{*}$};
% Text Node
\draw (227.67,103.4) node [anchor=north west][inner sep=0.75pt]    {$( \lambda -\mu ) T$};
% Text Node
\draw (434.17,190.9) node [anchor=north west][inner sep=0.75pt]    {$( \lambda -\mu ) T_{*}{}_{\ }$};

\end{tikzpicture}
}

\caption{Visualization of the second viewpoint. Interaction between two particles $(x,T)$ and $(x_*,T_*)$, where the former sits in a better position. For interaction to occur $T>T_*$ must be true.}
\label{fig:vis}
\end{figure}
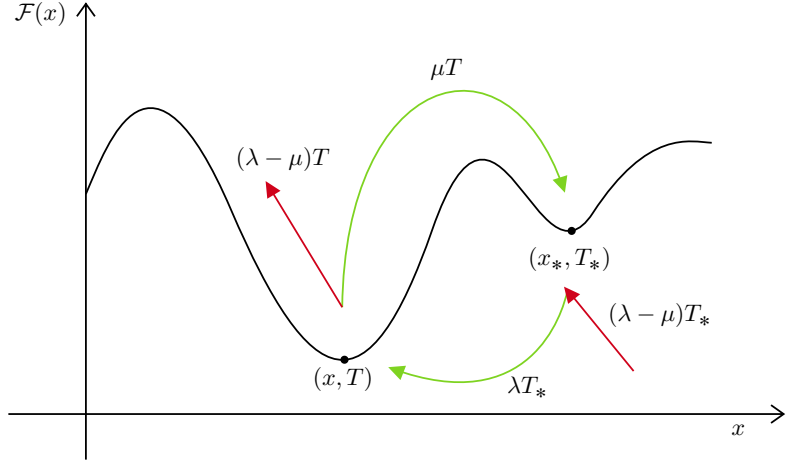
\end{enumerate}
A nice consequence of either viewpoint is the observation that if $\lambda=\mu$, then mean temperatures are conserved. In the warming/cooling viewpoint, this is a consequence of warming and cooling by precisely the same amount. In the exchange viewpoint, this is a consequence of having no gain or loss from the ``environment".

\end{example}
The following lemma can be easily derived.
\begin{lemma}
Let $\lambda, \mu \in [0,1]$ and let $\zeta$ and $\zeta_*$ be bounded random variables with values in $[-a,a]$ for some $a > 0$. Assume that for all $(x,x_*,T,T_*) \in \mathbb{R}^d \times \mathbb{R}^d \times \mathbb{R}^+ \times \mathbb{R}^+$, the interaction kernel $\sigma(x,x_*,T,T_*)$ satisfies the bound
\begin{equation}
\sigma(x,x_*,T,T_*) \leq  \frac{1}{a} \min\left\{T,T(1-\lambda)+\lambda T_*, \; T(1-\mu)+\mu T_*\right\},
\label{eq:pos}
\end{equation}
and that the same condition holds for
\(\sigma(x_*,x,T_*,T)\) after exchanging \(T\) and \(T_*\).
Then, the post-interaction temperatures \eqref{eq:PT_trans}  remain non-negative for all admissible values of $\zeta$, $\zeta_*$, $T$, and $T_*$.
\end{lemma}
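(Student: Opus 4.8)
The plan is a short case analysis driven by the two indicator factors in \eqref{eq:PT_trans}. Since the maps $(x,x_*,T,T_*,\xi)\mapsto T'$ and $(x_*,x,T_*,T,\xi_*)\mapsto T_*'$ are obtained from one another by the relabelling $x\leftrightarrow x_*$, $T\leftrightarrow T_*$, $\xi\leftrightarrow\xi_*$, and since the bound \eqref{eq:pos} is assumed for every ordered tuple of arguments (hence also for the swapped one), it suffices to prove $T'\ge0$; the estimate for $T_*'$ then follows verbatim. The structural observation is that $\chi_\mathcal{F}(x,x_*,T,T_*)=\Psi(\mathcal F(x)<\mathcal F(x_*))\,\Psi(T_*<T)$ and $\chi_\mathcal{F}(x_*,x,T_*,T)=\Psi(\mathcal F(x_*)<\mathcal F(x))\,\Psi(T<T_*)$ can never both equal $1$, because they demand opposite strict orderings of $\mathcal F(x)$ and $\mathcal F(x_*)$; hence at most one of the two alignment terms in $T'$ is active, leaving three cases. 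Throughout I use $\xi\in[-a,a]$ and that the noise intensity $\sigma$ is nonnegative, so that $\sigma\,\xi\ge -a\,\sigma$.

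If $\chi_\mathcal{F}(x,x_*,T,T_*)=1$ the $\mu$-term drops and $T'=T-\lambda(T-T_*)+\sigma(x,x_*,T,T_*)\,\xi=\big(T(1-\lambda)+\lambda T_*\big)+\sigma(x,x_*,T,T_*)\,\xi$; since $\lambda\in[0,1]$ and $T,T_*\ge 0$ this is a convex combination of nonnegative numbers plus the noise, and the first entry of the minimum in \eqref{eq:pos} yields $T'\ge T(1-\lambda)+\lambda T_*-a\,\sigma(x,x_*,T,T_*)\ge 0$. The case $\chi_\mathcal{F}(x_*,x,T_*,T)=1$ is the identical computation with $\lambda$ replaced by $\mu$, closed by the second entry of the minimum. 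This makes transparent why the single bound \eqref{eq:pos}, namely the minimum of the two candidate post-interaction temperatures $T(1-\lambda)+\lambda T_*$ and $T(1-\mu)+\mu T_*$, is the quantity one must impose: it simultaneously covers both interaction branches.

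There remains the case in which both indicators vanish, where $T'=T+\sigma(x,x_*,T,T_*)\,\xi$ and one only needs $\sigma(x,x_*,T,T_*)\le T/a$. If $T_*\le T$, each convex combination appearing in \eqref{eq:pos} is $\le T$ and the estimate follows at once; the complementary configuration forces $\mathcal F(x)\le\mathcal F(x_*)$ (both indicators being off while $T<T_*$), and it is closed either by the natural modelling convention that noise is activated only when an exchange is actually triggered — so that $T'=T\ge 0$ trivially — or, equivalently, by intersecting the minimum in \eqref{eq:pos} with $\{T,\,T_*\}$, which does not affect the two interaction branches. The same three cases applied to $T_*'$ finish the proof. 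I expect the only real work to be this bookkeeping — enumerating the admissible indicator patterns and checking that the minimum in \eqref{eq:pos} dominates the worst case in each branch (and, in particular, treating the no-exchange regime with a little care) — rather than any analytic subtlety, everything reducing to elementary convexity of $[0,1]$-combinations of nonnegative numbers.
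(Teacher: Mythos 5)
Your case split on which of the two mutually exclusive indicator factors in \eqref{eq:PT_trans} is active is exactly the argument the paper leaves implicit (the lemma is stated with no proof beyond ``can be easily derived''), and in the two interaction branches your computation is correct and sharp: there $T'$ reduces to the convex combination $T(1-\lambda)+\lambda T_*$ (resp.\ with $\mu$) plus $\sigma\xi\ge -a\sigma$, and the corresponding entry of the minimum in \eqref{eq:pos} is precisely what is needed. The valuable part of your write-up is the third case, and you are right to flag it rather than wave it away: when $T<T_*$ and $\mathcal F(x)\le\mathcal F(x_*)$ both indicators vanish, $T'=T+\sigma(x,x_*,T,T_*)\xi$, and both entries of the minimum in \eqref{eq:pos} equal $T$ plus a nonnegative multiple of $T_*-T$, hence exceed $T$ whenever $\lambda,\mu>0$; a $\sigma$ saturating \eqref{eq:pos} with $\xi=-a$ then gives $T'=-\min\{\lambda,\mu\}(T_*-T)<0$. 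So the lemma as literally stated does require one of the two repairs you propose --- either the convention that the noise fires only when an alignment term is active, or augmenting the minimum in \eqref{eq:pos} with $T$ (for $T'$; symmetrically $T_*$ for $T_*'$), which leaves the two interaction branches untouched since there the active convex combination is already $\le T$ in the only branch where it matters. Note that the concrete choice made later in the paper, $\sigma(x,x_*,T,T_*)=T$ with $a\le 1-\lambda\le 1$, automatically satisfies $\sigma\le T/a$, so nothing downstream is affected; but your observation is a genuine (if minor) correction to the statement as given.
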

Moreover, as we will show in Section \ref{subsec:analysis_moments}, the condition $\mu < \lambda$ also ensures that the expected mean temperature decreases over time.

\subsection{A kinetic description}
\label{sec:population}

In this subsection, we move to the description of the algorithm for large numbers of samples. This can be achieved by leveraging tools from classical kinetic theory^^>\cite{pareschi_interacting_2014} and considering the particle density $f=f(x,T,t)$ of samples in position $x$ with temperature $T$ at time $t>0$. We start from the evolution equation \eqref{eq:SAPTstrong}. First, we derive the form of the linear operator $\mathcal{L}(f)$. Second, we derive the form of the Boltzmann-type interaction operator $Q(f,f)$. Finally, we combine the two operators into a complete evolution equation. 

\subsubsection{Exploring the search space}
\label{subsec:population_Lf}

We define the action of the linear operator $\mathcal{L}(f)$ by ignoring any kind of interaction in $T$, i.e.,
\begin{equation}
\label{eq:Lf}
\frac{\partial f}{\partial t}=\mathcal{L}(f).
\end{equation}
We begin by generalizing the trial position given by eq. (\ref{eq:SAcandidate}) which now corresponds to 
\begin{equation}
\label{eq:SAexploration}
x'=x+\eta(T)\xi, \quad \xi\sim p(\xi, T),
\end{equation}
where $T\in\mathbb{R}^+$ is the temperature, $x\in\mathbb{R}^d$, and the proposal probability density $p(\xi, T)$ is assumed symmetric in $\xi$.

In the theoretical discussion below, the corresponding proposal kernel is understood to be symmetric on \(D\). Thus, if a trial move leaves \(D\), it is rejected, or equivalently the proposal is restricted to \(D\) in a symmetry-preserving way. We also assume that the Gibbs profile introduced below is normalizable for every \(T>0\).
 
Next, the acceptance-rejection dynamic of eq. (\ref{eq:SAacceptreject}) can be summarized in kernel form. 
For each fixed temperature \(T>0\), we introduce the Gibbs profile
\begin{equation}
\pi_{T,\mathcal F}(x)
=
\frac{1}{Z_{\mathcal F}(T)}
\exp\left(-\frac{\mathcal F(x)}{T}\right),
\qquad
Z_{\mathcal F}(T)
=
\int_{\mathcal D}
\exp\left(-\frac{\mathcal F(y)}{T}\right)\,dy.
\label{eq:bg_prob}
\end{equation}
%First, we introduce the Boltzmann-Gibbs probability
%\begin{equation}
%\label{eq:bg_prob}
%M_\mathcal{F}(x,T,t)=C\exp\left(\frac{-\mathcal{F}(x)}{T}\right)\text{ with }C=\left(\int_{\Omega} \exp\left(\frac{-\mathcal{F}(x)}{T}\right)d\omega\right)^{-1}
%\end{equation}
%where we have used the handy notation
%\begin{equation*}
%\Omega = \mathbb{R}^d\times\mathbb{R}^+,\qquad d\omega=dxdT.
%\end{equation*}
The kernel now reads 
\begin{equation}
\label{eq:SAkernel}
\mathcal{B}_\mathcal{F}(x\rightarrow x')=\min\Bigg\{1,\frac{\pi_{T,\mathcal F}(x')}{\pi_{T,\mathcal F}(x)}\Bigg\}=
\min\left\{
1,
\exp\left(-\frac{\mathcal F(x')-\mathcal F(x)}{T}\right)
\right\}.
\end{equation}
If we define the temperature marginal 
\begin{equation}
F(T,t):=\int_{\mathcal D}f(x,T,t)\,dx,
\label{eq:tmarginal}
\end{equation}
then the exploration operator \(\mathcal L\)
does not change this marginal. Hence, for a prescribed temperature marginal \(F\), the
corresponding local equilibrium manifold of \(\mathcal L\) is
\begin{equation}
M_{\mathcal F}(x,T,t)=F(T,t)\pi_{T,\mathcal F}(x).
\label{eq:equilibrium}
\end{equation}
Finally, to write down the evolution equation of the form (\ref{eq:Lf}), we split the operator into a gain part $\mathcal{L}^+(f)$ and a loss part $\mathcal{L}^-(f)$, i.e.,
\begin{equation}
\label{eq:Lfsplit}
\frac{\partial f}{\partial t}=\mathcal{L}^+(f)-\mathcal{L}(f)^-.
\end{equation}
The gain and loss parts model the in- and out-flow of particles respectively. To ``count" the gain and loss particles, define 
\[
\Omega:=\mathcal{D}\times \mathbb{R}^+,\qquad \omega := (x,T)\in \Omega,\qquad d\omega=dx dT,
\]
and consider a volume element $d\omega$ around $\omega$. The number of particles undergoing a move, thus leaving the volume, is given by 
\begin{equation}
\label{eq:Lf-}
\mathcal{L}^-(f)d\omega=\mathbb{E}\left[\mathcal{B}_\mathcal{F}(x\rightarrow x')f(x,T,t)d\omega\right],
\end{equation}
where we have used the notation
\begin{equation}
\mathbb{E}\left[ g \right]=\int_{\mathbb{R}^d}g(\xi)p(\xi,T)d\xi
\end{equation}
to denote the expectation under the ``movement" distribution $p(\xi,T)$. Inversely, the number of particles entering the domain is given by
\begin{equation}
\label{eq:Lf+}
\mathcal{L}^+(f)d\omega=\mathbb{E}\left[\mathcal{B}_\mathcal{F}(x'\rightarrow x)f(x',T,t)dx'dT\right]
=\mathbb{E}\left[\mathcal{B}_\mathcal{F}(x'\rightarrow x)f(x',T,t)d\omega\right].
\end{equation}
Under these additional assumptions, one obtains the formal strong form
\begin{equation}
\label{eq:SAStrong}
\frac{\partial f(x,T,t)}{\partial t}=\mathbb{E}\left[\mathcal{B}_\mathcal{F}(x'\rightarrow x)f(x',T,t)-\mathcal{B}_\mathcal{F}(x\rightarrow x')f(x,T,t)\right]\coloneqq\mathcal{L}(f).
\end{equation}

\begin{remark}[Markov process viewpoint]
Since the exploration part of the algorithm is closely related to a Metropolis--Hastings-type algorithm, it is insightful to consider the Markov process viewpoint. Consider our particles to be states in such a process. Using classical notation, we wish to find the transition probability $P(x'|x)$ since it uniquely defines the process, i.e., 
\begin{equation*}
P(x\rightarrow x')=P(x'|x)P(x),
\end{equation*}
where $P(x\rightarrow x')$ is the distribution of states that have undergone transition and $P(x)$ is the distribution of states before transition. We know that
\begin{equation*}
P(x'|x)=A(x',x)g(x'|x)
\end{equation*} 
where $A(x',x)$ is the acceptance distribution and $g(x'|x)$ is the proposal distribution. The acceptance distribution gives the probability to accept the proposed state $x'$. The proposal distribution is the conditional probability of generating state $x'$ from state $x$. In our setting, the acceptance distribution is given by the kernel $\mathcal{B}_\mathcal{F}(x\rightarrow x')$. It gives the probability to accept the proposed state. The proposal distribution is given by $p(\xi,T)$ since it generates the state $x'$ from $x$. The distribution of states $x$ is given by $f(x,T,t)$. We now write
\begin{equation*}
f(x\rightarrow x',T,t,\xi)=\mathcal{B}_\mathcal{F}(x\rightarrow x')f(x,T,t)p(\xi,T).
\end{equation*}
To get rid of the dependence of $\xi$, we integrate over the random variable $\xi$. Finally, we get
\begin{equation*}
f(x\rightarrow x',T,t)=\int_{\mathbb{R}^d}\mathcal{B}_\mathcal{F}(x\rightarrow x')f(x,T,t)p(\xi,T)d\xi.
\end{equation*}
By simply multiplying both sides by $d\omega$, we recover exactly eq. (\ref{eq:Lf-}). By a similar computation, we can also recover eq. (\ref{eq:Lf+}).
\end{remark}

\begin{lemma}
\label{lemma:symm}
For any symmetric probability density $p(\xi,T)$ and any integrable function $g(x,x')$ we have
\begin{equation}
\mathbb{E}\left[\int_{\mathbb{R}^d}g(x',x)dx\right]=\mathbb{E}\left[\int_{\mathbb{R}^d}g(x,x')dx\right].
\end{equation}
\begin{proof}
\cite[Lemma 3.1]{pareschi_optimization_2024}
\end{proof}
\end{lemma}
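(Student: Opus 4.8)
The plan is to reduce the identity to two elementary changes of variables after writing both sides as double integrals. Recall from \eqref{eq:SAexploration} that the primed variable is the translate $x' = x + \eta(T)\xi$, with $T$ treated as a fixed parameter, and that $\mathbb{E}[\,\cdot\,]$ denotes integration of the argument against $p(\cdot,T)$ in the variable $\xi$. Thus the left-hand side equals $\int_{\mathbb{R}^d}\int_{\mathbb{R}^d} g\big(x+\eta(T)\xi,\,x\big)\,dx\,p(\xi,T)\,d\xi$ and the right-hand side equals $\int_{\mathbb{R}^d}\int_{\mathbb{R}^d} g\big(x,\,x+\eta(T)\xi\big)\,dx\,p(\xi,T)\,d\xi$. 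Interchanging the order of the two integrations is justified by Fubini--Tonelli: $p(\cdot,T)$ has unit mass and, by translation invariance of Lebesgue measure, the double integral of $|g(x+\eta(T)\xi,x)|\,p(\xi,T)$ is finite (for the bounded proposal densities used in practice, such as the Gaussian and Cauchy choices of Remark~\ref{rem:noise}, the integrand is jointly integrable in $(x,\xi)$).

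First I would work on the left-hand side and, for each fixed $\xi$, substitute $y = x + \eta(T)\xi$ --- a translation with unit Jacobian --- which turns the inner integral into $\int_{\mathbb{R}^d} g\big(y,\,y-\eta(T)\xi\big)\,dy$. Only the sign in front of $\eta(T)\xi$ now distinguishes this from the right-hand side. To fix it, I would then substitute $\xi \mapsto -\xi$ in the outer integral: since $p(\cdot,T)$ is an even density, $p(-\xi,T)=p(\xi,T)$, and Lebesgue measure on $\mathbb{R}^d$ is reflection invariant, so the outer integral is unchanged except that $y-\eta(T)\xi$ becomes $y+\eta(T)\xi$. The result is precisely the right-hand side, which closes the argument. (If $\eta(T)=0$ both sides collapse to $\int_{\mathbb{R}^d} g(x,x)\,dx$ and there is nothing to prove.)

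There is no genuine obstacle here; this is a bookkeeping identity, which is why one may simply cite it. The only two points that deserve a line of justification are the applicability of Fubini (finiteness coming from $g$ integrable together with $\int p(\cdot,T)\,d\xi=1$) and the fact that the evenness hypothesis on $p$ is used in an essential way in the last substitution --- without symmetry of $p$ the identity fails in general.
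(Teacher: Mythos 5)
Your proof is correct, and it is essentially the argument behind the citation the paper gives: the standard translation change of variables $y = x + \eta(T)\xi$ followed by the reflection $\xi \mapsto -\xi$, which is exactly where the symmetry of $p(\cdot,T)$ enters (this is the proof of Lemma~3.1 in the cited reference \cite{pareschi_optimization_2024}). Nothing further is needed.
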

Using this result, the evolution equation in eq. (\ref{eq:SAStrong}) is more conveniently written in weak form,
\begin{align}
\nonumber
\frac{\partial}{\partial t}\int_{\Omega}& f(x,T,t)\varphi(x,T)d\omega =\int_{\Omega} \mathcal{L}(f)\varphi(x,T)d\omega\\
\label{eq:SAweak}
&=\mathbb{E}\left[\int_{\Omega}\mathcal{B}_\mathcal{F}(x\rightarrow x',T)\big(\varphi(x',T)-\varphi(x,T)\big)f(x,T,t)d\omega \right]
\\
\nonumber
&=\frac{1}{2}\mathbb{E}\left[\int_{\Omega}\big(\varphi(x',T)-\varphi(x,T)\big)\big(\mathcal{B}_\mathcal{F}(x\rightarrow x',T)f(x,T,t) -\mathcal{B}_\mathcal{F}(x'\rightarrow x,T)f(x',T,t) \big)d\omega \right],
\end{align}
where we used Lemma \ref{lemma:symm} in the final step.

\subsubsection{Non-local binary interactions and temperature exchanges}
\label{subsec:population_Qff}

We define the action of the Boltzmann-type interaction operator $Q(f,f)$ by ignoring any kind of search space dynamic, i.e.,
\begin{equation}
\frac{\partial f}{\partial t}=Q(f,f).
\end{equation}
Since we are dealing with a binary interaction, the evolution equation will be nonlinear and characterized by a Boltzmann-like structure. Specifically, the resulting operator has the structure of a Boltzmann-Povzner interaction integral^^>\cite{povzner_boltzmann_1962}. Similar to what we did in the prequel, we can also derive the evolution equation by splitting the operator in a gain and loss part,
\begin{equation}
\label{eq:Qffsplit}
\frac{\partial f(x,T,t)}{\partial t} = Q^+(f,f)-Q^-(f,f).
\end{equation}
We consider the particles in a volume element $d\omega$ around $\omega$ leaving and entering. By assuming that particles interact with the same probability, the interaction kernel is unitary and the Boltzmann operator is reminiscent of the classical one for Maxwellian particles^^>\cite{pareschi_interacting_2014}. In the sequel we assume that $f$ is a probability density on $\Omega$. The number of particles undergoing an interaction of the form \eqref{eq:PT_trans}, thus leaving the volume is given by
\begin{equation}
\label{eq:Qff-}
\begin{split}
Q^-(f,f)d\omega&=\mathbb{E}\left[ \int_{\Omega} f(x,T,t)f(x_*,T_*,t)d\omega_* \right] d\omega\\
&= f(x,T,t)\underbrace{\int_{\Omega} f(x_*,T_*,t)d\omega_*}_{=1}  d\omega\\
&= f(x,T,t)d\omega.
\end{split}
\end{equation}
When the temperature interaction map is invertible and sufficiently regular, as a consequence of an interaction between $(x,\lp T)$ and $(x_*,\lp T_*)$ that produces $(x,T)$ and $(x_*,T_*)$, one may formally write the gain part in strong form as follows
\begin{equation}
\label{eq:Qff+}
\begin{split}
Q^+(f,f)d\omega&=\mathbb{E}\left[ \int_{\Omega}f(x,\lp T,t)f(x_*,\lp T_*,t)dxd\lp Tdx_*d \lp T_* \right]\\
&=\mathbb{E}\left[ \int_{\Omega}f(x,\lp T,t)f(x_*,\lp T_*,t)\frac{1}{J} d\omega_* \right] d\omega,
\end{split}
\end{equation}
where $J=\Big|\frac{\partial(T,T_*)}{\partial(\lp T,\lp T_*)}\Big|$ is the Jacobian determinant of the temperature interaction.

Substituting eqs (\ref{eq:Qff-}) and (\ref{eq:Qff+}) into (\ref{eq:Qffsplit}) yields the desired form
\begin{equation}
\label{eq:PT_strong}
\frac{\partial f(x,T,t)}{\partial t}=\mathbb{E}\left[\int_\Omega f(x,\lp T,t)f(x_*,\lp T_*,t)\frac{1}{J} d\omega_*\right]-f(x,T,t)\coloneqq Q(f,f).
\end{equation}
For much of the remaining analysis, we move over to the weak form of eq. (\ref{eq:PT_strong}). We can write the weak form as
\begin{align}
\begin{split}
\label{eq:PTweaklong}
\frac{\partial}{\partial t}\int_{\Omega} f(x,T,t)&\varphi(x,T)d\omega=\int_{\Omega}Q(f,f)\varphi(x,T)d\omega\\
&=\mathbb{E}\left[\frac{1}{2}\int_{\Omega\times\Omega}\big(\varphi(x,T')+\varphi(x_*,T_*')-\varphi(x,T)-\varphi(x_*,T_*)\big) ff_*d\omega d\omega_*\right]
\end{split}\\
\label{eq:PTweakshort}
&=\mathbb{E}\left[\int_{\Omega\times\Omega}\big(\varphi(x,T')-\varphi(x,T)\big) ff_*d\omega d\omega_*\right].
\end{align}
The passage to eq. (\ref{eq:PTweakshort}) is only possible due to the symmetry of the interaction. Swapping $T$ and $T_*$ and $x$ and $x_*$ proves this statement.

\subsubsection{Combining the search dynamic with the temperature exchange}
\label{subsec:population_Lf_Qff}

The kinetic description of the combined processes is simply the sum of both dynamics. We write it in the following way
\begin{equation}
\label{eq:dynamic}
\frac{\partial f}{\partial t}-\mathcal{L}(f)=Q(f,f)
\end{equation}
to make a parallel to classical kinetic theory where we expect a similar form. We notice that our evolution equation is actually a Boltzmann--Povzner equation due to the form of $Q(f,f)$. 
\begin{remark}[Historical detour]
Povzner analyzed this particular form of equation in the 60s as a regularization of the Boltzmann equation by averaging collisions in space^^>\cite{povzner_boltzmann_1962, arkeryd_stationary_1999}. Back then, there was no proof of existence and uniqueness of the classical Boltzmann equation so Povzner simplified things by ``smearing" the process of interactions over the spatial domain. This has the consequence that interactions are no longer confined to single points but can occur anywhere, unlike in classical kinetic theory where interactions are inherently local due to the physical interpretation of molecules colliding at specific points in the domain. His results were long ignored by physicists because it held no physical relevance. We now know that his equation is useful when treating stochastic systems of particles that ``communicate" or interact in a different way than by colliding. A general example would be the swarming of animals or the flocking of birds^^>\cite{pareschi_interacting_2014, naldi_particle_2010, albi_binary_2013}. Our operator also ``smears" local interactions over the temperature domain allowing non-local interactions. 
\end{remark}

\section{Analysis and mean-field limit}
\label{sec:analysis}
In this section, we perform some analysis in order to better understand the combined dynamic. We start with the long-time behaviour of the algorithm, prove the decay of first and second moments and then discuss the mean-field limit.

\subsection{Long-time behaviour}
In this subsection, we analyse long-time convergence using an entropy-based argument. An important aspect of the algorithm is temperature decay. We will show that our interaction dynamic indeed drives the average temperature down. Under stronger assumptions, we can also prove the decay of the second moment in the temperatures. 

\subsubsection{Entropy structure of the exploration operator}
\label{subsec:analysis_entropies}

We first recall the standard entropy structure of the Metropolis exploration
operator. For each fixed temperature \(T>0\), the Metropolis kernel satisfies
detailed balance with respect to the Gibbs profile \(\pi_{T,\mathcal F}\) defined
in \eqref{eq:bg_prob}. The corresponding local equilibrium associated with the
joint density \(f(x,T,t)\) is the manifold \(M_{\mathcal F}[f]\) defined
in \eqref{eq:equilibrium}.

For fixed \(T>0\), the Gibbs profile \(\pi_{T,\mathcal F}\) concentrates near the
global minimizers of \(\mathcal F\) as \(T\to0^+\). This follows from the Laplace
principle,
\begin{equation}
\label{eq:laplace}
\lim_{T\rightarrow 0^+}
-T\log\left(
\int_{\mathcal D}g(x)e^{-\mathcal F(x)/T}\,dx
\right)
=
\inf_{x\in\operatorname{supp}(g)}\mathcal F(x),
\end{equation}
for suitable continuous probability densities \(g\). This observation explains
the classical simulated annealing mechanism: relaxation toward Gibbs profiles,
combined with cooling, leads to concentration near global minimizers.

The following entropy functional is the temperature average of the fixed-temperature
Shannon--Boltzmann entropy
\begin{equation}
\label{eq:entr}
H(f|M_{\mathcal F})
:=
\int_{\Omega}
f(x,T,t)\log\left(
\frac{f(x,T,t)}{M_{\mathcal F}(x,T,t)}.
\right)\,d\omega,
\end{equation}
%with the usual convention on the set where \(F(T,t)=0\).

Setting
\[
\varphi(x,T)
=
\log\left(
\frac{f(x,T,t)}{M_{\mathcal F}(x,T,t)}
\right)
\]
in \eqref{eq:SAweak}, and using the fact that \(F(T,t)\) is not changed by
\(\mathcal L\), one obtains the entropy dissipation identity
\begin{equation}
\label{eq:SAentr_long}
\begin{split}
\int_{\Omega}\mathcal L(f)
\log\left(
\frac{f(x,T,t)}{M_{\mathcal F}(x,T,t)}
\right)d\omega
=
-\frac12\mathbb E\bigg[
\int_{\Omega}
&\mathcal B_{\mathcal F}(x\rightarrow x',T)
M_{\mathcal F}(x,T,t)
\\
&\times
h\left(
\frac{f(x',T,t)}{M_{\mathcal F}(x',T,t)},
\frac{f(x,T,t)}{M_{\mathcal F}(x,T,t)}
\right)d\omega
\bigg],
\end{split}
\end{equation}
where
\[
h(a,b)=(a-b)(\log a-\log b)\ge0.
\]
We therefore have the following entropy decay property for the exploration
operator.

\begin{theorem}[Entropy decay for the exploration operator]
Along sufficiently regular solutions of
\[
\partial_t f=\mathcal L(f),
\]
one has
\begin{equation}
\label{eq:SAentr_short}
\frac{d}{dt}H(f|M_{\mathcal F})
=
-I_{\mathcal F}[f]\le0,
\end{equation}
where
\begin{equation}
\label{eq:If}
I_{\mathcal F}[f]
=
\frac12\mathbb E\left[
\int_{\Omega}
\mathcal B_{\mathcal F}(x\rightarrow x',T)
M_{\mathcal F}(x,T,t)
h\left(
\frac{f(x',T,t)}{M_{\mathcal F}(x',T,t)},
\frac{f(x,T,t)}{M_{\mathcal F}(x,T,t)}
\right)d\omega
\right].
\end{equation}
\end{theorem}

If, in addition, an entropy-production inequality
\begin{equation}
\label{eq:entprod}
I_{\mathcal F}[f]\geq \tau H(f|M_{\mathcal F})
\end{equation}
holds for some \(\tau>0\), then
\begin{equation}
\label{eq:H_const}
H(f(t)|M_{\mathcal F}(t))
\le
H(f_0|M_{\mathcal F}(0))e^{-\tau t}.
\end{equation}
By the Csiszár--Kullback inequality
\cite{csiszar_information-type_1967,kullback_lower_1967}, this implies convergence
of \(f\) toward the local equilibrium \(M_{\mathcal F}(0)=F(T,0)\pi_T(x)\) in \(L^1(\Omega)\), for
the exploration dynamics alone.

The entropy-production inequality \eqref{eq:entprod} is an additional coercivity assumption on the Metropolis kernel, analogous to a spectral-gap or logarithmic-Sobolev estimate. It is not asserted here in full generality.
In the full CAST dynamics, the interaction operator \(Q(f,f)\) changes the
temperature marginal \(F(T,t)\), and no global Gibbs equilibrium in the joint
variables \((x,T)\) is claimed.
\begin{remark}[Time dependent cooling]
Classical simulated annealing employs a continuous time dependent cooling scheme. This means the Shannon--Boltzmann entropy has some additional term that must be bounded. We refer to^^>\cite{pareschi_optimization_2024} for the full details but the main idea is that eq. (\ref{eq:H_const}) now reads
\begin{equation*}
\frac{d}{dt}H(f|M_\mathcal{F}) \leq -\tau H(f|M_\mathcal{F})-\frac{T'(t)}{T^2(t)}||\mathcal{F}||_{L_\infty(\mathbb{R}^d)}.
\end{equation*}
In order for the inequality to hold, we must ensure that $\frac{T'(t)}{T^2(t)}$ decays with time, i.e., 
\begin{equation*}
\lim_{t\rightarrow\infty}\frac{T'(t)}{T^2(t)}=0.
\end{equation*} 
If, for example, we choose $T(t)=\frac{1}{t}$ then 
\begin{equation}
\lim_{t\rightarrow\infty}\frac{T'(t)}{T^2(t)}=\lim_{t\rightarrow\infty}\frac{\left(\frac{1}{t}\right)'}{\left(\frac{1}{t}\right)^2}=\lim_{t\rightarrow\infty}\frac{-\frac{1}{t^2}}{\frac{1}{t^2}}=-1\neq0,
\end{equation}
violating the aforementioned condition to prove entropy decay. If instead we use eq. (\ref{eq:temp}) in continuous (and slightly different) form then
\begin{equation}
\lim_{t\rightarrow\infty}\frac{T'(t)}{T^2(t)}=\frac{\left(\frac{1}{\log(t)}\right)'}{\left(\frac{1}{\log(t)}\right)^2}=\frac{-\frac{1}{t\log(t)^2}}{\frac{1}{\log(t)^2}}=\lim_{t\rightarrow\infty}-\frac{1}{t}=0.
\end{equation}
This recalls the classical role of inverse-logarithmic cooling in simulated
annealing. In CAST, however, the temperature is not externally prescribed but
evolves through the interaction operator \(Q(f,f)\).
\end{remark}

\subsubsection{Evolution of moments in temperature}
\label{subsec:analysis_moments}
The purpose of the temperature interaction is to generate a collective cooling
mechanism. We show below that, when \(\mu<\lambda\), the expected mean
temperature is non-increasing. Under stronger assumptions, one can also obtain
dissipative contributions in the evolution of the second moment. 

Since the exploration operator $\mathcal L$ leaves the temperature variable unchanged, we can analyse the behaviour of temperature mean and second moment by looking at the sign of equation (\ref{eq:PTweakshort}) when 
\begin{equation}
\varphi(x,T) = T \text{ or } \varphi(x,T) = T^2.
\end{equation}
The sign of the moment equations is governed by 
\begin{equation*}
\mathbb{E}\left[\varphi(x,T')-\varphi(x,T)\right]
\end{equation*}
inside the double integral since $f(x,T,t)f(x_*,T_*,t)$ is always positive. The following two lemmas discuss the behavior of these moment equations.

\begin{lemma}[Evolution of the first moment]
\label{lemma:moment1}
The evolution of $M_1:=\int_{\Omega}fTd\omega$ for $\mu<\lambda$ satisfies
\begin{equation*}
\frac{dM_1}{dt} = (\mu-\lambda)\int_{\Omega\times\Omega}(T-T_*)\chi_\mathcal{F}(x,x_*,T,T_*){ff_*}d\omega d\omega_* \leq 0.
\end{equation*}
\end{lemma}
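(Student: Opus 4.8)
The plan is to start from the short weak form \eqref{eq:PTweakshort} with the test function $\varphi(x,T)=T$, so that
\[
\frac{dM_1}{dt}=\mathbb{E}\left[\int_{\Omega\times\Omega}(T'-T)\,ff_*\,d\Omega\,d\Omega_*\right].
\]
First I would substitute the explicit update rule for $T'$ from \eqref{eq:PT_trans},
\[
T'-T=-\lambda(T-T_*)\chi_\mathcal{F}(x,x_*,T,T_*)-\mu(T-T_*)\chi_\mathcal{F}(x_*,x,T_*,T)+\sigma(x,x_*,T,T_*)\xi,
\]
and take the expectation over $\xi$. Since $\xi$ has mean zero and $\sigma$ does not depend on $\xi$, the noise term vanishes, leaving only the two deterministic drift contributions.

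Next I would treat the two $\chi_\mathcal{F}$ terms. The term with $\chi_\mathcal{F}(x,x_*,T,T_*)$ is already in the desired form, contributing $-\lambda\iint (T-T_*)\chi_\mathcal{F}(x,x_*,T,T_*)ff_*\,d\Omega\,d\Omega_*$. For the term with $\chi_\mathcal{F}(x_*,x,T_*,T)$, I would apply the change of variables $(x,T)\leftrightarrow(x_*,T_*)$, under which $ff_*\,d\Omega\,d\Omega_*$ is invariant, $(T-T_*)\mapsto(T_*-T)=-(T-T_*)$, and $\chi_\mathcal{F}(x_*,x,T_*,T)\mapsto\chi_\mathcal{F}(x,x_*,T,T_*)$. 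Hence that term becomes $+\mu\iint (T-T_*)\chi_\mathcal{F}(x,x_*,T,T_*)ff_*\,d\Omega\,d\Omega_*$, and combining gives exactly
\[
\frac{dM_1}{dt}=(\mu-\lambda)\int_{\Omega\times\Omega}(T-T_*)\chi_\mathcal{F}(x,x_*,T,T_*)\,ff_*\,d\Omega\,d\Omega_*.
\]

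Finally I would argue the sign. On the support of $\chi_\mathcal{F}(x,x_*,T,T_*)=\Psi(\mathcal F(x)<\mathcal F(x_*))\Psi(T_*<T)$ we have $T_*<T$, so $T-T_*>0$ there, and therefore the integrand $(T-T_*)\chi_\mathcal{F}ff_*$ is nonnegative; since $\mu<\lambda$ the prefactor $(\mu-\lambda)$ is negative, yielding $dM_1/dt\le 0$. I do not expect a genuine obstacle here — the computation is routine — but the one step that deserves care is the relabeling symmetry argument for the $\mu$-term: one must check that the whole expression, including the implicit dependence of $\chi_\mathcal{F}$ on its arguments, transforms correctly under the swap, and that Lemma~\ref{lemma:symm}-style invariance of the product measure legitimately applies. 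A minor additional remark is that ties $\mathcal F(x)=\mathcal F(x_*)$ contribute nothing (they are excluded by the strict inequality in $\Psi$), so no boundary terms appear; Lemma~\ref{le:31} is not strictly needed for this first-moment identity but confirms the tie set is harmless.
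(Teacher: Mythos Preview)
Your proposal is correct and follows essentially the same route as the paper: compute $\mathbb{E}[T'-T]$ to drop the noise, then use the swap $(x,T)\leftrightarrow(x_*,T_*)$ on the $\mu$-term to combine both drift contributions into the single $(\mu-\lambda)$ integral, and read off the sign from $T-T_*>0$ on the support of $\chi_\mathcal{F}$. The paper leaves the relabeling step implicit, whereas you spell it out; the paper additionally invokes Lemma~\ref{le:31} afterward to obtain a more explicit decay rate, which you correctly note is not needed for the identity and sign alone.
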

\begin{proof}
We write
\begin{equation*}
\mathbb{E}\left[ T'-T\right] =  -\lambda(T-T_*)\chi_\mathcal{F}(x,x_*,T,T_*)-\mu(T-T_*)\chi_\mathcal{F}(x_*,x,T_*,T)
\end{equation*}
since $\mathbb{E}\left[\zeta\right] = 0$. Substituting inside the weak form yields
\begin{align*}
\frac{dM_1}{dt} &=\int_{\Omega\times\Omega}\left[-\lambda(T-T_*)\chi_\mathcal{F}(x,x_*,T,T_*)-\mu(T-T_*)\chi_\mathcal{F}(x_*,x,T_*,T)\right]ff_*d\omega d\omega_*\\
&=(\mu-\lambda)\int_{\Omega\times\Omega}(T-T_*)\chi_\mathcal{F}(x,x_*,T,T_*){ff_*}d\omega d\omega_*.
\end{align*}
Since \(\chi_\mathcal{F}(x,x_*,T,T_*)\neq0\) implies \(T>T_*\), the integral on
the right-hand side is nonnegative. Therefore, if \(\mu<\lambda\), the expected
mean temperature is non-increasing. 
\end{proof}
Notice that no closed expression in terms of
the temperature marginal alone is obtained, since the interaction indicator also
depends on the ordering of the objective values \(\mathcal F(x)\) and
\(\mathcal F(x_*)\).
\begin{remark}
\label{rem:speed} This result is consistent with Remark \ref{ex:viewpoint} where we claimed that for $\lambda=\mu$ the mean temperatures would be conserved. Indeed, from the proof of Lemma \ref{lemma:moment1} we get
\begin{equation*}
\frac{dM_1}{dt}=0,
\end{equation*}
if $\mu=\lambda$. For $\mu<\lambda$, we see that the particles lose temperature over time at a rate loosely proportional to $\lambda-\mu$.
\end{remark}

\begin{lemma}[Evolution of the second moment]
\label{lemma:moment2}
The evolution of $
M_2:=\int_{\Omega}fT^2d\omega$
satisfies
 \begin{equation*}
\frac{dM_2}{dt}=\int_{\Omega\times\Omega}\left[(\mu-\lambda)(T^2-T_*^2)+(\lambda^2+\mu^2-\lambda-\mu)(T-T_*)^2\right]\chi_\mathcal{F}(x,x_*,T,T_*)ff_*d\omega d\omega_*.
\end{equation*}
for $\lambda,\mu\in [0,1]$, $\mu\leq\lambda$ and $\sigma(x,x_*,T,T_*)=0$. 
\end{lemma}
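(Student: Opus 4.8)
The plan is to insert the test function $\varphi(x,T)=T^{2}$ into the weak form \eqref{eq:PTweakshort}, which gives
\[
\frac{dM_2}{dt}=\int_{\Omega\times\Omega}\mathbb{E}\big[\,T'^{2}-T^{2}\,\big]\,ff_*\,d\Omega\,d\Omega_*.
\]
The computation of $\mathbb{E}[T'^{2}-T^{2}]$ rests on one elementary observation: the two indicator functions occurring in \eqref{eq:PT_trans} have disjoint support, i.e.\ $\chi_\mathcal{F}(x,x_*,T,T_*)\,\chi_\mathcal{F}(x_*,x,T_*,T)\equiv 0$, since the first requires $\mathcal{F}(x)<\mathcal{F}(x_*)$ and the second requires $\mathcal{F}(x_*)<\mathcal{F}(x)$. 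Writing $\chi:=\chi_\mathcal{F}(x,x_*,T,T_*)$, $\chi_*:=\chi_\mathcal{F}(x_*,x,T_*,T)$ and $\delta:=T-T_*$, the post-interaction temperature with $\sigma\equiv 0$ reads $T'=T-\lambda\delta\chi-\mu\delta\chi_*$; using $\chi^{2}=\chi$, $\chi_*^{2}=\chi_*$ and $\chi\chi_*=0$ one obtains the clean expansion
\[
T'^{2}-T^{2}=\big(-2\lambda T\delta+\lambda^{2}\delta^{2}\big)\chi+\big(-2\mu T\delta+\mu^{2}\delta^{2}\big)\chi_*.
\]
Since $\sigma\equiv 0$ there is no $\xi$-dependence at all, so the expectation $\mathbb{E}[\cdot]$ acts trivially and no $\mathbb{E}[\xi^{2}]$-type terms appear.

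Next I would symmetrise. The measure $ff_*\,d\Omega\,d\Omega_*$ is invariant under the relabelling $(x,T)\leftrightarrow(x_*,T_*)$, under which $\chi_*\mapsto\chi$, $T\mapsto T_*$ and $\delta\mapsto-\delta$. Applying this change of variables to the $\chi_*$-part of the integrand only, $(-2\mu T\delta+\mu^{2}\delta^{2})\chi_*$ becomes $(2\mu T_*\delta+\mu^{2}\delta^{2})\chi$, so that
\[
\frac{dM_2}{dt}=\int_{\Omega\times\Omega}\Big[-2\lambda T(T-T_*)+2\mu T_*(T-T_*)+(\lambda^{2}+\mu^{2})(T-T_*)^{2}\Big]\chi_\mathcal{F}(x,x_*,T,T_*)\,ff_*\,d\Omega\,d\Omega_*.
\]
It then only remains to verify the purely algebraic identity
\[
-2\lambda T(T-T_*)+2\mu T_*(T-T_*)+(\lambda^{2}+\mu^{2})(T-T_*)^{2}=(\mu-\lambda)(T^{2}-T_*^{2})+(\lambda^{2}+\mu^{2}-\lambda-\mu)(T-T_*)^{2},
\]
which follows by factoring $(T-T_*)$ out of both sides and comparing the remaining linear expressions (equivalently, by expanding both sides in the monomials $T^{2}$, $T_*^{2}$, $TT_*$). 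This delivers exactly the claimed formula.

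I do not anticipate a genuine obstacle: the statement is an identity, and the hypotheses $\lambda,\mu\in[0,1]$ and $\mu\le\lambda$ are not actually used in its derivation (they become relevant only for the subsequent sign analysis of $dM_2/dt$), while the assumption $\sigma\equiv 0$ is precisely what removes the noise contribution. The only place requiring a little care is the bookkeeping of the indicator functions — in particular the disjointness $\chi\chi_*=0$, which is what makes the square of $T'$ collapse to a sum of single-indicator terms, and the correct tracking of the sign flip $\delta\mapsto-\delta$ when symmetrising the $\chi_*$-term. One may optionally rewrite the result further with Lemma~\ref{le:31}, as was done for the first moment, but this is not needed for the present statement.
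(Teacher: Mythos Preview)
Your proposal is correct and follows essentially the same route as the paper: both compute $\mathbb{E}[T'^2-T^2]$ using $\chi\chi_*=0$ and $\chi^2=\chi$, then symmetrise the $\chi_*$-contribution via the swap $(x,T)\leftrightarrow(x_*,T_*)$ to collect everything under a single $\chi_\mathcal{F}(x,x_*,T,T_*)$, and finally perform the same algebraic regrouping. Your remark that the constraints $\lambda,\mu\in[0,1]$ and $\mu\le\lambda$ are unused in deriving the identity (and matter only for the subsequent sign discussion) is accurate and matches how the paper deploys them.
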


\begin{proof}
We start from the post–interaction temperature update
\[
T' = T - \lambda (T - T_*)\chi_\mathcal{F}(x,x_*,T,T_*) - \mu (T - T_*)\chi_\mathcal{F}(x_*,x,T_*,T),
\]
and compute
\[
\begin{split}
\mathbb{E}[T'^2 - T^2]
= &-2\lambda T(T-T_*)\chi_\mathcal{F}(x,x_*,T,T_*) 
  -2\mu T(T-T_*)\chi_\mathcal{F}(x_*,x,T_*,T)\\
  &+ \lambda^2 (T-T_*)^2\chi_\mathcal{F}(x,x_*,T,T_*) 
  + \mu^2 (T-T_*)^2\chi_\mathcal{F}(x_*,x,T_*,T),
  \end{split}
\]
where we used  
\[
\chi_\mathcal{F}(x,x_*,T,T_*)\chi_\mathcal{F}(x_*,x,T_*,T)=0,\qquad \chi^2_\mathcal{F}(x,x_*,T,T_*)=\chi_\mathcal{F}(x,x_*,T,T_*).
\]
Integrating against $ff_*$ on $\Omega\times\Omega$ yields
\begin{equation*}
\begin{split}
\frac{dM_2}{dt}&=\int_{\Omega\times\Omega}\left[-2\lambda T(T-T_*)-2\mu T_*(T_*-T)\right]\chi_\mathcal{F}(x,x_*,T,T_*)ff_*d\omega d\omega_*\\
&+\int_{\Omega\times\Omega}(\lambda^2+\mu^2)(T-T_*)^2\chi_\mathcal{F}(x,x_*,T,T_*)ff_*d\omega d\omega_*.
\end{split}
\end{equation*}
Expanding the first integral and rearranging yields
 \begin{equation*}
\begin{split}
\frac{dM_2}{dt}&=\int_{\Omega\times\Omega}(\mu-\lambda)(T^2-T_*^2)\chi_\mathcal{F}(x,x_*,T,T_*)ff_*d\omega d\omega_*\\
&+\int_{\Omega\times\Omega}(\lambda^2+\mu^2-\lambda-\mu)(T-T_*)^2\chi_\mathcal{F}(x,x_*,T,T_*)ff_*d\omega d\omega_*.
\end{split}
\end{equation*}

The first integral is either negative or zero if $\mu\leq\lambda$. Since $\lambda(\lambda-1)+\mu(\mu-1)\leq 0$ for $\lambda,\mu\in[0,1]$, the second integral is non-positive, and it is strictly negative if the support of
$\chi_\mathcal{F}(x,x_*,T,T_*)$ has positive measure and $\lambda,\mu\in(0,1)$.
Thus, in the absence of interaction noise, \(M_2\) is non-increasing under the
stated assumptions. 
\end{proof}
As in the first-moment estimate, we do not obtain a closed
formula depending only on the temperature marginal, because the interaction
indicator couples temperature and position through the ordering of
\(\mathcal F(x)\) and \(\mathcal F(x_*)\).

\begin{remark}[Effect of noise]
\label{rem:lemma2}
If the stochastic term is included, the right-hand side in Lemma \ref{lemma:moment2} is supplemented by the nonnegative contribution
\[
\mathbb{E}[\zeta^2]\int_{\Omega\times\Omega} \sigma^2(x,x_*,T,T_*)\,\,f f_*\,d\omega\,d\omega_*.
\]
Thus, in the noisy case, monotone decay of \(M_2\) is not automatic.
It holds only if this positive contribution is dominated by the
dissipative part of the deterministic interaction. In particular, the noise-free estimate should be understood as the baseline coercive mechanism, while bounded interaction noise may be allowed under a suitable smallness condition.
\end{remark}

\subsection{Mean-field approximation}
\label{sec:meanfield}
Due to the coupling between position and temperature, the full kinetic equation is
not closed at the level of either marginal alone. In addition to the marginal $F(T,t)$ defined 
in \eqref{eq:tmarginal}, we therefore introduce the
additional marginal
\begin{equation}
\rho(x,t):=\int_{\mathbb R^+}f(x,T,t)\,dT,
\end{equation}
and derive formal reduced descriptions, emphasizing where closure is lost. 

\subsubsection{Temperature averaged case}
\label{subsec:meanfield_temp}
For the temperature averaged case, we can write
\begin{equation}
\frac{\partial}{\partial t}\int_{\mathbb{R}^+}f(x,T,t)dT-\int_{\mathbb{R}^+}\mathcal{L}(f)dT=\int_{\mathbb{R}^+}Q(f,f)dT.
\end{equation}
The temperature interaction does not change the position variable. This can be
seen directly from the weak form of \(Q\). Indeed, choosing a test function
\(\varphi(x,T)=\psi(x)\), independent of \(T\), in \eqref{eq:PTweaklong}, gives
\[
\begin{aligned}
\int_\Omega Q(f,f)\psi(x)\,d\omega
&=
\frac12\mathbb E
\int_{\Omega\times\Omega}
\big[
\psi(x)+\psi(x_*)-\psi(x)-\psi(x_*)
\big]
ff_*\,d\omega\,d\omega_* \\
&=0.
\end{aligned}
\]
Therefore, 
\[
\int_{\mathbb R^+}Q(f,f)\,dT=0,
\]
and hence

%Expanding the last term yields
%\begin{align}
%\begin{split}
%\int_{\mathbb{R}^+}Q(f,f)dT&=\int_{\mathbb{R}^+}\int_\Omega f(x,\lp T,t)f(x_*,\lp T_*,t)\frac{1}{J} d\omega_* dT-\int_{\mathbb{R}^+} f(x,T,t) dT\\
%&=\int_{\mathbb{R}^d}\int_{\mathbb{R}^+}\int_{\mathbb{R}^+} f(x,\lp T,t)f(x_*,\lp T_*,t)\frac{1}{J} dTdT_*dx_*-\int_{\mathbb{R}^+} f(x,T,t) dT\\
%&=\int_{\mathbb{R}^d}\int_{\mathbb{R}^+}\int_{\mathbb{R}^+} f(x,T,t)f(x_*,T_*,t)\frac{1}{J} dT'dT_*'dx_*-\int_{\mathbb{R}^+} f(x,T,t) dT\\
%&=\int_{\mathbb{R}^d}\int_{\mathbb{R}^+}\int_{\mathbb{R}^+} f(x, T,t)f(x_*,T_*,t)dTdT_*dx_*-\int_{\mathbb{R}^+} f(x,T,t) dT\\
%&=\underbrace{\int_{\Omega}f(x_*,T_*,t)d\omega_*}_{=1}\int_{\mathbb{R}^+}f(x,T,t)dT-\int_{\mathbb{R}^+}f(x,T,t)dT\\
%&=0
%\end{split}
%\end{align}
%giving us
\begin{equation}
\label{eq:rho}
\frac{\partial \rho(x,t)}{\partial t}-\int_{\mathbb{R}^+}\mathcal{L}(f)dT=0.
\end{equation}
In general, the above equation does not admit a closed form in terms of the marginal density $\rho(x,t)$. Even in the mean-field scaling where the operator $\mathcal{L}(f)$ reduces to a Langevin dynamics (see^^>\cite{pareschi_optimization_2024} for more details), we will get 
\[
\frac{\partial \rho(x,t)}{\partial t} = \nabla_x \cdot \left(\rho(x,t)\nabla_x \mathcal{F}(x) \right) + \int_{\mathbb{R}^+} T\Delta_{xx} f(x,T,t)\,dT.
\]
We now apply integration by parts and define the average temperature at point \(x\) as
\begin{equation}
\bar{T}(x,t) := \frac{1}{\rho(x,t)} \int T f(x,T,t) \, dT = \frac{M_1(x,t)}{\rho(x,t)}.
\end{equation}
This gives
\begin{equation}
\partial_t\rho(x,t)
=
\nabla_x\cdot\big(\rho(x,t)\nabla_x\mathcal F(x)\big)
+
\Delta_x M_1(x,t),
\end{equation}
or equivalently, since \(M_1(x,t)=\rho(x,t)\bar T(x,t)\), one obtains
\begin{equation}
\partial_t\rho
=
\nabla_x\cdot\left(
\rho\nabla_x\mathcal F
+
\bar T\nabla_x\rho
+
\rho\nabla_x\bar T
\right).
\end{equation}

Thus, the spatial marginal equation is not closed unless an additional closure is
introduced. If the local average temperature is approximately homogeneous in
space, namely
\[
\bar T(x,t)\approx \bar T(t),
\]
then one obtains the closed nonlinear Fokker--Planck approximation
\begin{equation}
\partial_t\rho
=
\nabla_x\cdot\left(
\rho\nabla_x\mathcal F
+
\bar T(t)\nabla_x\rho
\right).
\end{equation}
This is the spatial marginal model corresponding to Langevin exploration with an
effective time-dependent temperature generated by the collective dynamics.
Note that, the above Langevin approximation for the exploration operator applies to finite-variance proposals, such as Gaussian proposals, whereas it is not defined for Cauchy proposals.

\subsubsection{Space averaged case}
\label{subsec:meanfield_space}
For the space averaged case, we can write
\begin{equation}
\frac{\partial}{\partial t}\int_{\mathcal D}f(x,T,t)dx-\int_{\mathcal D}\mathcal{L}(f)dx=\int_{\mathcal D}Q(f,f)dx.
\end{equation}
Expanding the second term yields
\begin{align}
\begin{split}
\int_{\mathcal D}\mathcal{L}(f)dx&=\int_{\mathcal D}\mathbb{E}\left[\mathcal{B}_\mathcal{F}(x'\rightarrow x)f(x',T,t)-\mathcal{B}_\mathcal{F}(x\rightarrow x')f(x,T,t)\right] dx\\
&=\int_{\mathcal D}\mathbb{E}\left[\mathcal{B}_\mathcal{F}(x'\rightarrow x)f(x',T,t)\right] dx-\int_{\mathcal D}\mathbb{E}\left[\mathcal{B}_\mathcal{F}(x\rightarrow x')f(x,T,t)\right] dx\\
&=\int_{\mathcal D}\mathbb{E}\left[\mathcal{B}_\mathcal{F}(x\rightarrow x')f(x,T,t)\right] dx-\int_{\mathcal D}\mathbb{E}\left[\mathcal{B}_\mathcal{F}(x\rightarrow x')f(x,T,t)\right] dx\\
&=0
\end{split}
\end{align}
by Lemma \ref{lemma:symm}. Thus, we get
\begin{equation}
\label{eq:tau}
\frac{\partial F(T,t)}{\partial t}=\int_{\mathcal D}Q(f,f)dx.
\end{equation}
Unfortunately, it is impossible to write the right-hand side as $Q(F,F)$ because of the implicit dependence of $x$ and $x_*$ in $T'$ and $T_*'$. The weak form is therefore similar to eq. (\ref{eq:PTweakshort}) (unlike before, we do not use the $\Omega$ notation to underline the integration over the spatial domain), i.e., 
\begin{equation}
\label{eq:tau_weak}
\frac{\partial}{\partial t}\int_{\mathbb{R}^+} F(T,t)\varphi(T)dT=\mathbb{E}\left[\int_{\mathbb{R}^+\times\mathbb{R}^+}\left[\int_{\mathcal D\times\mathcal D}\big(\varphi(T')-\varphi(T)\big) ff_*dxdx_*\right] dTdT_*\right].
\end{equation}

\subsubsection{Mean-field scaling of the temperature exchange}
We consider the following scaling
\begin{equation*}
t\rightarrow t/\varepsilon,\quad \lambda\rightarrow \varepsilon \lambda,\quad \mu\rightarrow \varepsilon \mu, \quad \sigma \rightarrow \sqrt{\varepsilon} \sigma.
\end{equation*}
Therefore we have
\[
T'-T
=
-\varepsilon\lambda(T-T_*)\chi_\mathcal{F}(x,x_*,T,T_*)
-\varepsilon\mu(T-T_*)\chi_\mathcal{F}(x_*,x,T_*,T)
+\sqrt{\varepsilon}\sigma(x,x_*,T,T_*)\zeta.
\]
For $\varepsilon \ll 1$, we can write the Taylor expansion
\begin{equation}
\varphi(T')=\varphi(T)+(T'-T)\frac{d\varphi(T)}{dT}+\frac{1}{2}(T'-T)^2\frac{d^2\varphi(T)}{dT^2}+O\left(\varepsilon^{3/2}\right).
\end{equation} 
Substituting into eq. (\ref{eq:tau_weak}) and splitting the integral yields
\begin{align}
\frac{\partial}{\partial t}\int_{\mathbb{R}^+} F(T,t)\varphi(T)dT= &\underbrace{\frac1{\varepsilon}\int_{\mathbb{R}^+\times\mathbb{R}^+}\int_{\mathcal D\times\mathcal D}\mathbb{E}\left[T'-T\right]\frac{d\varphi(T)}{dT}ff_*dxdx_*dTdT_*}_{:=I_1} \\
+\frac{1}{2}&\underbrace{\frac1{\varepsilon}\int_{\mathbb{R}^+\times\mathbb{R}^+}\int_{\mathcal D\times\mathcal D}\mathbb{E}\left[(T'-T)^2\right]\frac{d^2\varphi(T)}{dT^2} ff_*dxdx_* dTdT_*}_{:=I_2}.
\end{align}
Expanding \(I_1\) under the scaling gives
\begin{equation}
\begin{split}
I_1
=
\int_{\mathbb R^+\times\mathbb R^+}
\int_{\mathcal D\times\mathcal D}
&\left[
-\lambda(T-T_*)\chi_\mathcal F(x,x_*,T,T_*)
\right.\\
&\left.
-\mu(T-T_*)\chi_\mathcal F(x_*,x,T_*,T)
\right]
\frac{d\varphi(T)}{dT}
ff_*\,dx\,dx_*\,dT\,dT_* .
\end{split}
\end{equation}
The diffusion contribution is
\begin{equation}
I_2
=
\mathbb E[\zeta^2]
\int_{\mathbb R^+\times\mathbb R^+}
\int_{\mathcal D\times\mathcal D}
\sigma(x,x_*,T,T_*)^2
\frac{d^2\varphi(T)}{dT^2}
ff_*\,dx\,dx_*\,dT\,dT_* .
\end{equation}
At this level the equation for the temperature marginal is not closed, because
the drift depends on the correlation between temperature and objective value. To
obtain a reduced model, we introduce the following quasi-equilibrium closure with
respect to the exploration operator:
\begin{equation}
\label{eq:closure_gibbs}
f(x,T,t)\approx F(T,t)\pi_T(x),
\end{equation}
where \(F(T,t)=\int_{\mathcal D}f(x,T,t)\,dx\) and \(\pi_T\) is defined in
\eqref{eq:bg_prob}. This closure is natural when the Metropolis exploration
relaxes faster than the temperature-exchange dynamics, so that the conditional
spatial distribution at fixed temperature is close to its Gibbs profile.

Under this closure, define the ordering kernel
\begin{equation}
\label{eq:ordering_kernel}
K(T,T_*)
:=
\int_{\mathcal D\times\mathcal D}
\Psi\big(\mathcal F(x)<\mathcal F(x_*)\big)
\pi_T(x)\pi_{T_*}(x_*)\,dx\,dx_* .
\end{equation}
This kernel measures, under Gibbs distributions at temperatures \(T\) and
\(T_*\), the probability that a particle at temperature \(T\) has a lower
objective value than a particle at temperature \(T_*\). In general
\(K(T,T_*)\neq 1/2\), which reflects the coupling between temperature and
objective value.

The closed drift for the temperature marginal is then
\begin{equation}
\label{eq:closed_drift}
\mathcal A[F](T)
=
-\lambda F(T,t)\int_0^T
(T-T_*)K(T,T_*)F(T_*,t)\,dT_*
+
\mu F(T,t)\int_T^\infty
(T_*-T)K(T_*,T)F(T_*,t)\,dT_* .
\end{equation}
Similarly, the closed diffusion coefficient is
\begin{equation}
\label{eq:closed_diffusion}
\mathcal D[F](T)
=
F(T,t)
\int_0^\infty\left[
\int_{\mathcal D\times\mathcal D}
\sigma(x,x_*,T,T_*)^2
\pi_T(x)\pi_{T_*}(x_*)\,dx\,dx_* \right]
F(T_*,t)\,dT_*,
\end{equation}
Therefore, under the quasi-equilibrium closure \eqref{eq:closure_gibbs}, the
grazing-interaction limit yields the closed Fokker--Planck equation
\begin{equation}
\label{eq:FP_temperature_closed}
\partial_t F(T,t)
=
-\partial_T\mathcal A[F](T)
+
\frac{\mathbb E[\zeta^2]}{2}
\partial_{TT}\mathcal D[F](T).
\end{equation}
Equation \eqref{eq:FP_temperature_closed} is understood formally on
\(\mathbb{R}_+\), with zero-flux behaviour at \(T=0\) and sufficient decay as \(T\to\infty\). More precisely, writing
\begin{equation*}
\mathcal J_F(T,t)=\mathcal{A}[F](T)-\frac{\mathbb{E}[\zeta^2]}{2}\partial_TD[F](T),
\end{equation*}
we require
\begin{equation*}
\mathcal J_F(0,t)=0,
\qquad
\lim_{T\to\infty}\mathcal J_F(T,t)=0,
\end{equation*}
so that the total mass of \(F\) is conserved. In the multiplicative-noise case \(\sigma(x,x_*,T,T_*)=T\), one has the inner integral of (\ref{eq:closed_diffusion}) $=T^2$. If the total mass is normalized to one, then
\[
\mathcal D[F](T)=T^2F(T,t).
\]

\section{Implementation aspects}
\label{sec:impl}
In this section, we give implementation details, generalizations and parameter choices used in the experiments to follow.

\subsection{Implementation}

The numerical implementation of our algorithm relies on the standard Metropolis algorithm combined with a direct simulation Monte Carlo method for the Boltzmann interactions.\cite{babovsky_simulation_1986,borghi_wasserstein_2026} We start with some notation:
\begin{itemize}
\item We define the $j^{th}$ dimension of particle $i$ at time step $n$ as $X^{n,i}_j$. The matrix of $N$ particles with dimension $d$ at time step $n$ is boldfaced as $\mathbf{X}^{n}$.
\item We define the best particle position $\widehat{X}^n$ (with corresponding temperature $\widehat{T}^n$) at time step $n$ as
\begin{equation}
\label{eq:best}
i_n^*\in\argmin_{1\le i\le N}\mathcal F(X^{n,i}),
\qquad
\widehat X^n:=X^{n,i_n^*},
\qquad
\widehat T^n:=T^{n,i_n^*}.
%\widehat{X}^n=\argmin\mathcal{F}\left(X^{n,i}\right).
\end{equation}
\item We define the average particle position $\widetilde{X}^n$ at time step $n$ as 
\begin{equation}
\label{eq:avg}
Adm^n
=
\left\{
X^{n,i}:\ \|X^{n,i}\|_\infty\leq1,\ i=1,\ldots,N
\right\},
\qquad
\widetilde X^n
=
\frac{1}{|Adm^n|}
\sum_{X\in Adm^n}X .
%Adm=\left\{X^{n,i}: \ |X^{n,i}|\leq 1, \ n=1\cdots N \right\},\quad \widetilde{X}^n=\frac{1}{|Adm|}\sum_{X \in Adm} X
\end{equation}
Here \(Adm^n\) is the set of particles that remain inside the initial computational
box. This restricted average is used only as a diagnostic, since rare particles
that travel far outside the initial box may otherwise dominate the empirical mean.
%Where $Adm$ is the set of admissible particle positions to take into account for the average, i.e., only taking into account particles that remain in the initial domain. The reason for doing this is that empirical evidence suggests that particles outside of the domain (sometimes) have the tendency to go very far. This results in skewing the average, we therefore leave these particles out.   
\item The error is measured using the mean squared error and is computed as
\begin{equation}
\text{MSE}(Y) = \frac{1}{d}\sum_{j=1}^d\left(Y_j-x^*_j \right)^2,
\end{equation}
where $Y$ is some vector $\in\mathbb{R}^d$ and $x^*$ is the global minimum. The subscript denotes the $j^{th}$ dimension.
\item In order to reduce Monte Carlo fluctuations, we run the optimization algorithms
over \(N_{\rm runs}\) independent runs and compute empirical averages. For a
quantity of interest \(Z\), we use
\begin{equation}
\mathbb E[Z]\approx
\frac{1}{N_{\rm runs}}
\sum_{n=1}^{N_{\rm runs}}Z^{(n)},
\end{equation}
where \(Z^{(n)}\) denotes the value of \(Z\) in run \(n\).
\end{itemize} 
The algorithm now reads,

\begin{algorithm}[htb]
\caption{Collective Annealing by Switching Temperatures}\label{alg:nb}
\begin{algorithmic}[1]
\Require Vectors $\mathbf{X^0}\in\mathbb{R}^N$ and $\mathbf{T^0}\in\mathbb{R}^N$
\For{$n=1$ to $n_{\text{steps}}$}
\State $\widetilde{\mathbf{X}}^{n+1}\gets \mathbf{X}^n+\eta(\mathbf{T})\mathbf{\xi}$ \Comment{$\mathbf{\xi}\in\mathbb{R}^N\sim p(\xi,T)$}
\State $\mathbf{X}^{n+1} \gets \widetilde{\mathbf{X}}^{n+1}_{\text{accepted}}$ \Comment{Set accepted particles according to eq.(\ref{eq:SAacceptreject})}
\State $N_c \gets \text{Iround}(N/2)$
\State select $N_c$ pairs $(i,j)$ \Comment{Uniformly among all possible pairs without replacement}
\For {each pair $(i,j)$}
\State Compute $T_i'$ and $T_j'$ \Comment{According to eq. (\ref{eq:PT_trans})}
\State $T_i^{n+1} \gets T_i'$, $T_j^{n+1} \gets T_j'$
\EndFor
\State $T_i^{n+1} \gets T_i^n$ for all unselected particles.
\EndFor
\end{algorithmic}
\end{algorithm}
where the $\text{Iround(x)}$ function denotes a stochastic integer rounding, i.e.,
\begin{equation*}
\text{Iround}(x)=\begin{dcases}
\text{int}(x)+1 & \text{with prob.}\quad x-\text{int}(x)\\
\text{int}(x) & \text{with prob.}\quad \text{int}(x)+1-x\\
\end{dcases} .
\end{equation*}
$\mathbf{T_0}$ is a vector $\in\mathbb{R}^N$ containing the initial temperature for the $N$ particle system. For the remainder of the tests, the initial temperatures are \textbf{uniformly} taken between $T_l$ and $T_h$ where $0<T_l<T_h$. 

\subsection{Noise term choice}
\label{sec:noise}
In the numerical tests, we will take the noise term as
\begin{equation}
\label{eq:noise}
\sigma(x,x_*,T,T_*)=T.
\end{equation}
This is an intuitive choice since the added noise in the interaction is directly
proportional to the current temperature. According to \eqref{eq:pos}, the noise
must be bounded to ensure positivity of the post-interaction temperatures. With
the choice \(\sigma(x,x_*,T,T_*)=T\), a simple sufficient condition can be obtained
by considering the worst cases in the temperature update.

For instance, if \(\chi_\mathcal F(x,x_*,T,T_*)=1\), then
\begin{equation}
\begin{split}
T' &= T-\lambda(T-T_*)+T\zeta
    =T(1-\lambda+\zeta)+\lambda T_*,\\
T_*' &= T_*-\mu(T_*-T)+T_*\zeta_*
    =T_*(1-\mu+\zeta_*)+\mu T.
\end{split}
\end{equation}
The worst case corresponds to the smallest admissible noise and to the smallest
value of the opposite temperature. This gives the sufficient conditions
\[
a\le 1-\lambda,
\qquad
a\le 1-\mu.
\]
Since in the simulations we use \(\mu<\lambda\), we impose the stronger condition
\begin{equation}
\label{eq:bound}
a\le 1-\lambda.
\end{equation}
This condition also implies the no-interaction constraint \(a\le1\) associated
with the update \(T'=T+T\zeta\).

It will later be useful to scale the noise down a bit. We therefore introduce the \textbf{noise scaling factor $\bm{\kappa}$} such that the support now reads
\begin{equation}
\label{eq:noise_scaled}
a = \kappa(1-\lambda),
\end{equation}
where $\kappa\leq1$.

Finally, the exploration noise in eq. (\ref{eq:SAexploration}) must be sampled from a distribution we called $p(\xi,T)$. As mentioned in Remark \ref{rem:noise}, we choose Cauchy noise. 

\subsection{Further generalization}

In order to balance the search aspect of the algorithm, controlled by $\mathcal{L}(f)$, and the interaction aspect, controlled by $Q(f,f)$, we introduce \textbf{the interaction strength parameter $\bm{\gamma}$}. The dynamic of eq. (\ref{eq:dynamic}) now reads
\begin{equation}
\label{eq:dynamic_scaled}
\frac{\partial f}{\partial t}-\mathcal{L}(f)=\gamma Q(f,f).
\end{equation}
By decreasing $\gamma$, we allow more exploration and fewer particle-particle interactions.  This can be important in some cases to allow particles to ``settle" a bit before performing a new temperature interaction. Conversely, increasing $\gamma$ allows for more interactions, therefore reducing the temperature faster. In classical kinetic theory, this parameter is closely related to the (inverse) Knudsen number.
\\
On the particle-level itself, i.e., in the Nanbu-Babovsky algorithm above, this is achieved by changing the $5^{th}$ line to 
\begin{equation*}
N_c\leftarrow \text{Iround}(\gamma N/2).
\end{equation*} 
For \(\gamma>1\), pairs are sampled with replacement from all unordered pairs.
Thus a particle may interact more than once during a single time step.
In order to compare our algorithm with classical SA, we want comparable particle temperatures. As a reminder, our algorithm simulates $N$ particles where the highest temperature is given by $T_h\in\mathbb{R}^+$ and the lowest is given by $T_l\in\mathbb{R}^+$. All other $N-2$ temperatures are taken uniformly between $T_h$ and $T_l$. Thus, the average particle temperature is given by 
\begin{equation*}
\bar{T} = \frac{T_h+T_l}{2}.
\end{equation*}
Classical multi-particle SA also simulates $N$ particles but all have the same temperature, we set this to $\widebar{T}$. This gives us the possibility to tweak $T_h$ and $T_l$ while still keeping the value for $\widebar{T}$ fixed. In order to somewhat limit the amount of free parameters, we employ the following logic:
\begin{enumerate}
\item Choose a \textbf{fixed} value for the SA temperature/our average temperature, i.e., $\widebar{T}$.
\item Set $T_h=2\widebar{T}$ and $T_l=0$.
\item Tweak the particle's temperature spread by varying $T_{\mathrm{spread}}$ and setting the new highest and lowest temperatures as
\begin{equation}
\label{eq:tvar}
T_h'=2\widebar{T}-T_{\mathrm{spread}}\quad \text{and} \quad T_l'=T_{\mathrm{spread}}.
\end{equation}
with the restriction that $0<T_{\mathrm{spread}}<\widebar{T}$.
\end{enumerate}

\section{Numerical examples}
\label{sec:num}
In this section, we first introduce the two test functions on which we will test the algorithm, namely Ackley and Rastrigin.\cite{jamil_literature_2013,rastrigin_systems_1974} We then study CAST numerically and compare it with classical SA where the temperature follows either an inverse logarithmic or geometric decay. We perform various experiments in order to understand the effect of the various parameters and to compare performance in low and high dimensions. 
All code used to run the algorithms and generate the figures is available at \begin{center}
\url{https://gitlab.kuleuven.be/numa/public/CAST}
\end{center}

\subsection{Test functions}
The two test functions we will consider were selected because they highlight complementary optimization challenges:
the Ackley function features a moderately multimodal landscape with a smooth basin around the global minimum, useful to assess convergence efficiency and stability, whereas the Rastrigin function exhibits a highly multimodal and oscillatory surface with many regularly spaced local minima, posing a much more demanding test for global exploration and avoidance of local traps.
Together, they form a minimal yet representative benchmark set to evaluate both the exploration and exploitation capabilities of the proposed algorithm.

\begin{figure}[H]
\begin{subfigure}{0.49\textwidth}
\centering
\includegraphics[width=0.9\linewidth]{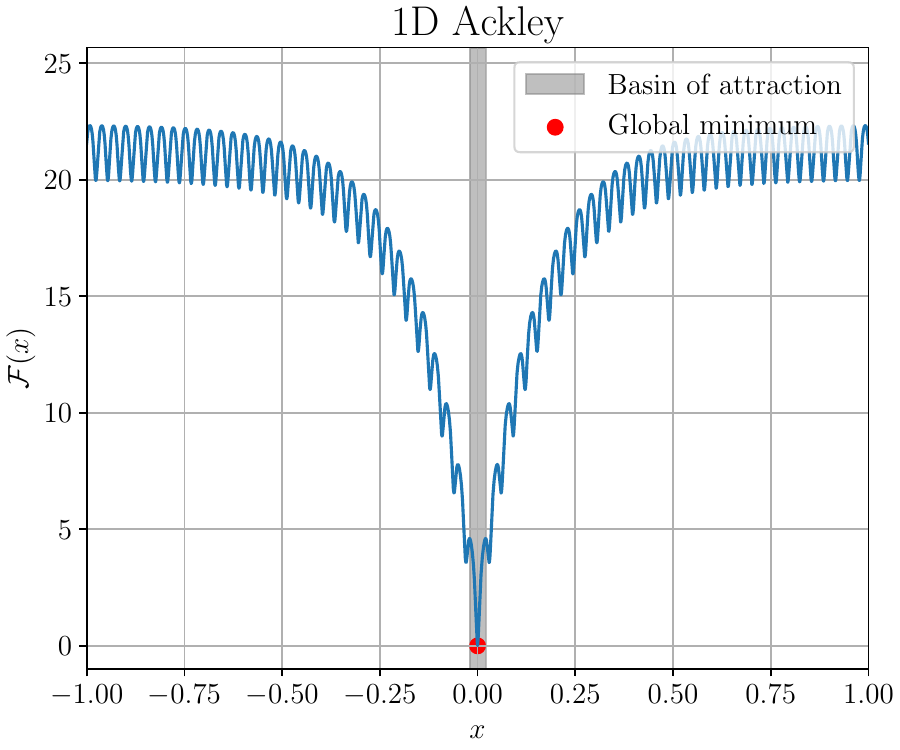}
\caption{Ackley test function.}
\end{subfigure}
\hfill
\begin{subfigure}{0.49\textwidth}
\centering
\includegraphics[width=0.9\linewidth]{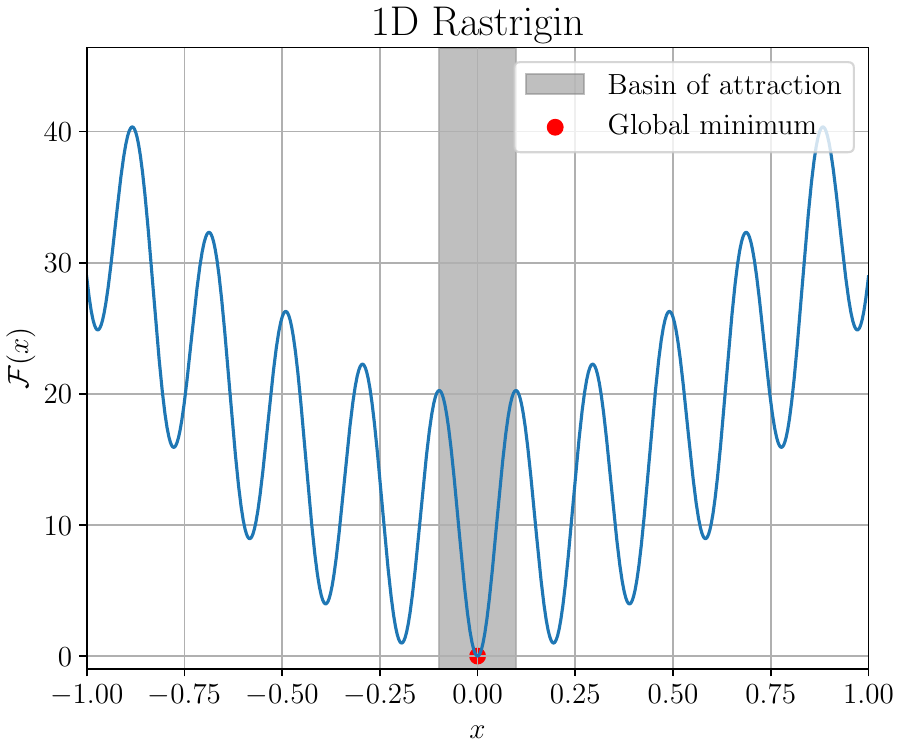}
\caption{Rastrigin test function.}
\end{subfigure}
\caption{Visualisation of the two test functions in dimension one. The shaded area shows the basin of attraction and the red dot indicates the global minimum.}
\end{figure}

\paragraph{Ackley.}
The Ackley function is given by
\begin{equation}
\label{eq:ackley}
\mathcal{F}(x)=-20\exp\Bigg(-\frac{1}{5}\sqrt{\frac{1}{d}\sum_{j=1}^d x_j^2}\Bigg)-\exp\Bigg(\frac{1}{d}\sum_{j=1}^d\cos(2\pi x_j)\Bigg)+20+e.
\end{equation}
where $d$ is the dimension and $e$ is Napier's constant. The function is evaluated on the hypercube $x_j\in[-32.768,32.768]$ for $j=1,\cdots,d$ and its global minimum lies in $x^*=(0,\cdots,0)$ with $\mathcal{F}(x^*)=0$. We call ``the basin of attraction", the area where a particle must simply follow the gradient down to reach the global minimum. For the Ackley function this is given by the open $||\cdot||_\infty$-ball with radius $\approx 0.67$.

\paragraph{Rastrigin.} 
The Rastrigin function is given by
\begin{equation}
\label{eq:rastrigin}
\mathcal{F}(x)
=
10d+\sum_{j=1}^d\left(x_j^2-10\cos(2\pi x_j)\right).
\end{equation}
The function is evaluated on the hypercube $x_j\in[-5.12,5.12]$ for $j=1,\cdots,d$ and its global minimum lies in $x^*=(0,\cdots,0)$ with $\mathcal{F}(x^*)=0$.
The basin for the Rastrigin function is given by the open $||\cdot||_\infty$-ball with radius $0.5$.

Since we are interested in comparing the performance of our algorithm on these two functions, we adopt a rescaling of the domain to the hypercube $x_j\in[-1,1]$ for $j=1,\cdots,d$.

\subsection{Parametric effects on the temperature}

We start our numerical analysis in dimension one. We are initially interested in understanding the effects of the various parameters on the temperature decay. We summarize them here:
\begin{enumerate}
\item The temperature interaction strengths, $\mu<\lambda$ with $\mu,\lambda\in(0,1)$. See eq. (\ref{eq:PT_trans}).

\item The noise scaling, $\kappa\in(0,1)$. See eq. (\ref{eq:noise_scaled}).

\item The interaction strength, $\gamma\in(0,\infty)$. See eq. (\ref{eq:dynamic_scaled}).

\item The temperature spread, $T_{\mathrm{spread}}$. See eq. (\ref{eq:tvar}).

\end{enumerate}
For the following calibration tests, we set $N=4\times 10^4$ and simulate the algorithm over 750 time steps on the one-dimensional Ackley function. Each simulation is repeated and averaged over 100 times to avoid statistical bias. The purpose of these tests is not to perform an exhaustive optimization of the parameters, but rather to verify that the collective temperature interaction can reproduce standard cooling behaviours through suitable choices of the interaction
parameters

\subsubsection{Logarithmic decay}
We wish to tune the parameters such that we \textit{approximate} a logarithmic temperature decay. We fix $\widebar{T}=0.05$ and set the \textit{initial} parameters to 
\begin{equation*}
(\mu,\lambda,\kappa,\gamma,T_{\mathrm{spread}})=(0.2,0.7,1,1,0.02).
\end{equation*}
We will tune the parameters one by one, showing the direct effect of the parameter with a figure and a few key observations. The figures will show: the desired logarithmic decay $\frac{1}{ln(t+e)}$ (black dashed line); the mean temperature (red solid line); and the temperature distribution as a blue gradient color histogram.

\paragraph{Effect of noise scaling parameter $\kappa$}

\begin{figure}[htb]
\centering
\includegraphics[width=0.95\linewidth]{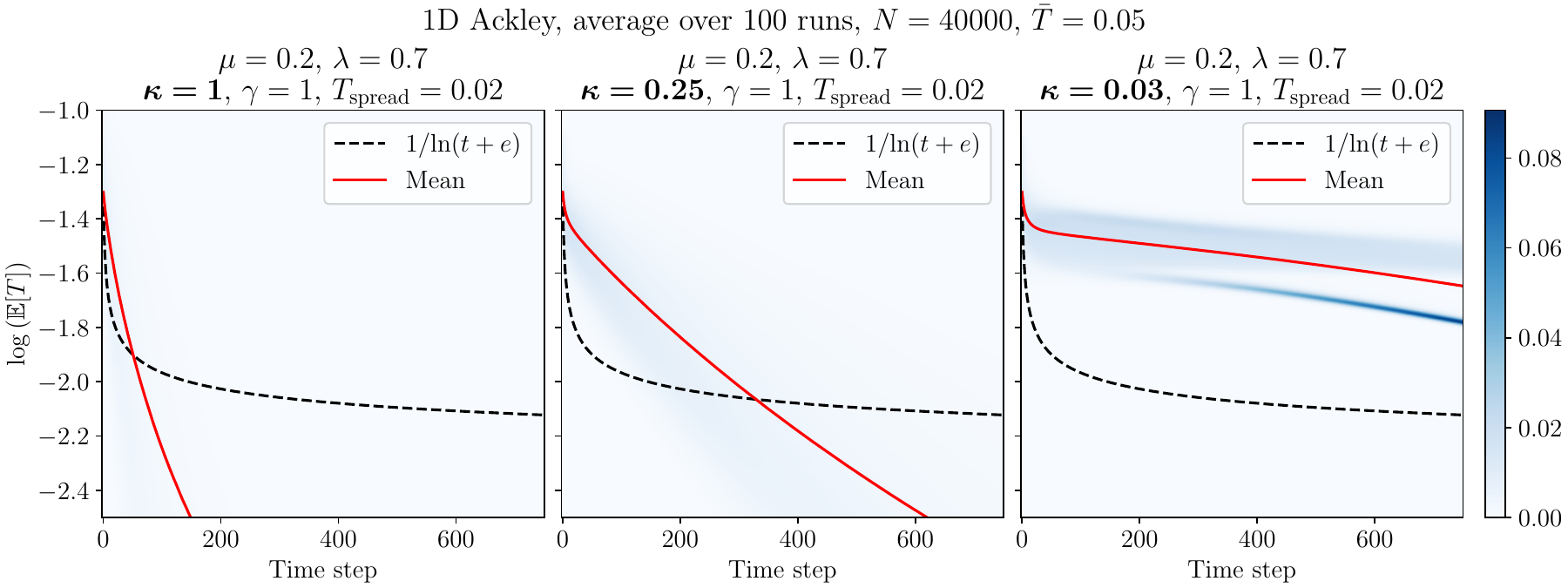}
\caption{Effect of the noise scaling parameter $\kappa$.}
\label{fig:kappa}
\end{figure}

Shown in Figure \ref{fig:kappa}, reducing the support of the random variable $a$ by adjusting $\kappa$ leads us to the following observations: 
\begin{itemize}
\item It is clear that $\kappa=1$ is too large. Too many particle-particle interactions have the effect of reducing the mean temperature too rapidly. 
\item Setting $\kappa=0.7$ reduces the speed at which the temperature decays and also seems to narrow the range of temperatures, as can be see on the histogram.
\item Further reducing $\kappa$ to $0.03$ further slows down the temperature decay. The shape of the mean temperature starts resembling an inverse logarithmic decay.
\end{itemize}

\paragraph{Effect of temperature interaction parameter $\lambda$}
\begin{figure}[htb]
\centering
\includegraphics[width=0.95\linewidth]{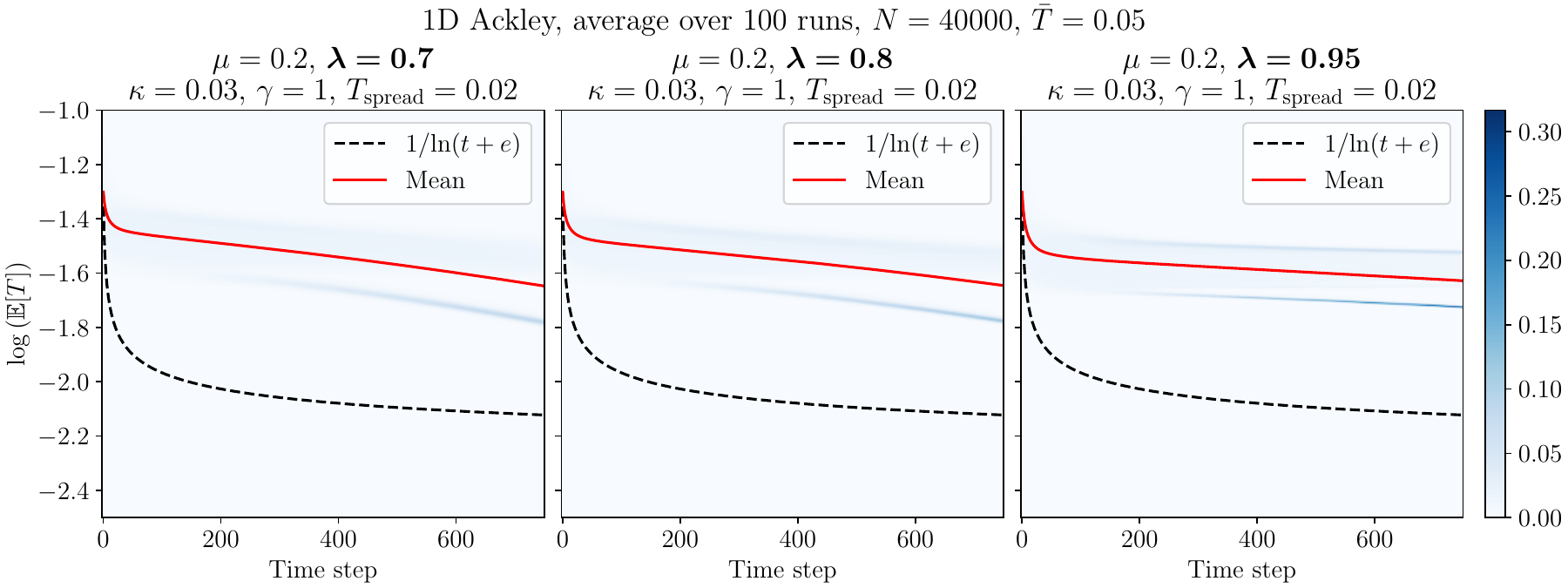}
\caption{Effect of the temperature interaction strength parameter $\lambda$.}
\label{fig:lambda}
\end{figure}

Shown in Figure \ref{fig:lambda}, increasing the temperature interaction strength by tuning $\lambda$ leads us to the following observations:
\begin{itemize}
\item By increasing $\lambda$ up to $0.95$, we observe a slower decay. The mean temperature further approaches the shape of logarithmic decay.
\item We notice that increasing $\lambda$ and decreasing $\kappa$ have similar effects. This is because the support of the noise $a$ is a function of both $\lambda$ and $\kappa$.
\end{itemize}

\paragraph{Effect of interaction strength parameter $\gamma$}

\begin{figure}[htb]
\centering
\includegraphics[width=0.95\linewidth]{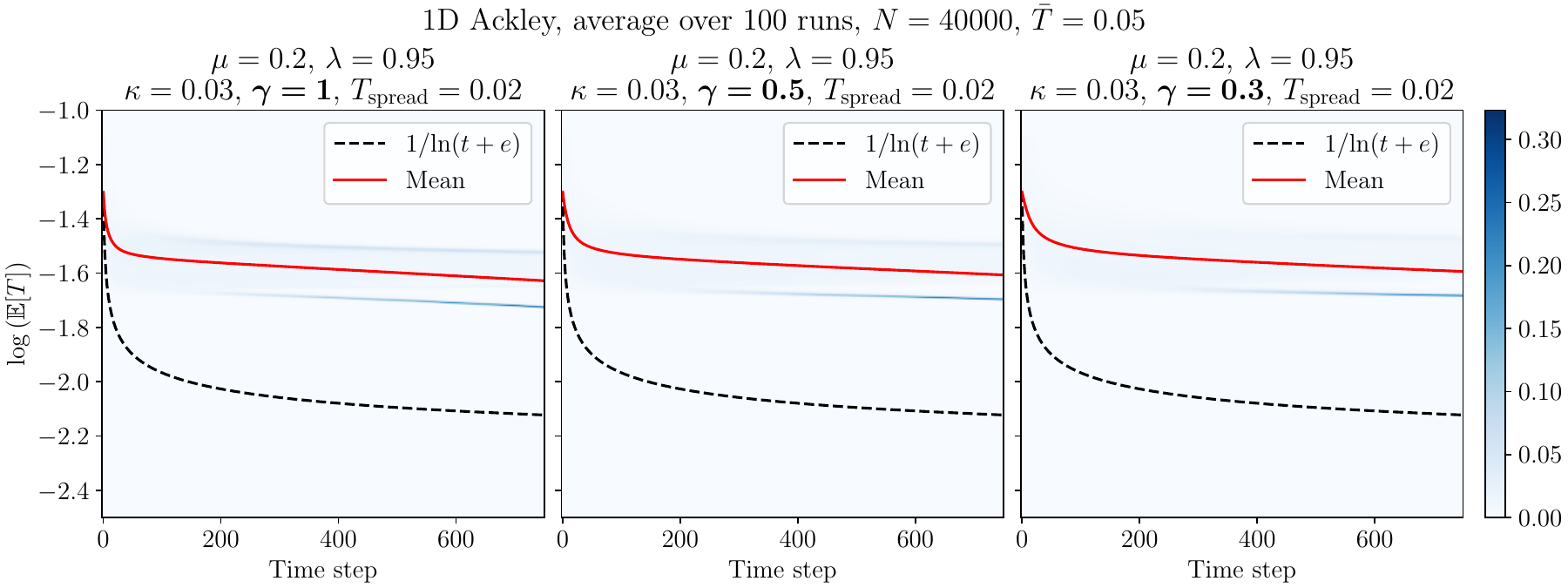}
\caption{Effect of the interaction strength parameter $\gamma$.}
\label{fig:gamma}
\end{figure}

Shown in Figure \ref{fig:gamma}, decreasing the interaction strength $\gamma$ leads us to the following observation:
\begin{itemize}
\item By reducing $\gamma$ to $0.3$, we notice a slight reduction in decay speed as expected by looking at eq. (\ref{eq:dynamic_scaled}).
\end{itemize}

\paragraph{Effect of temperature interaction parameter $\mu$}

\begin{figure}[htb]
\centering
\includegraphics[width=0.95\linewidth]{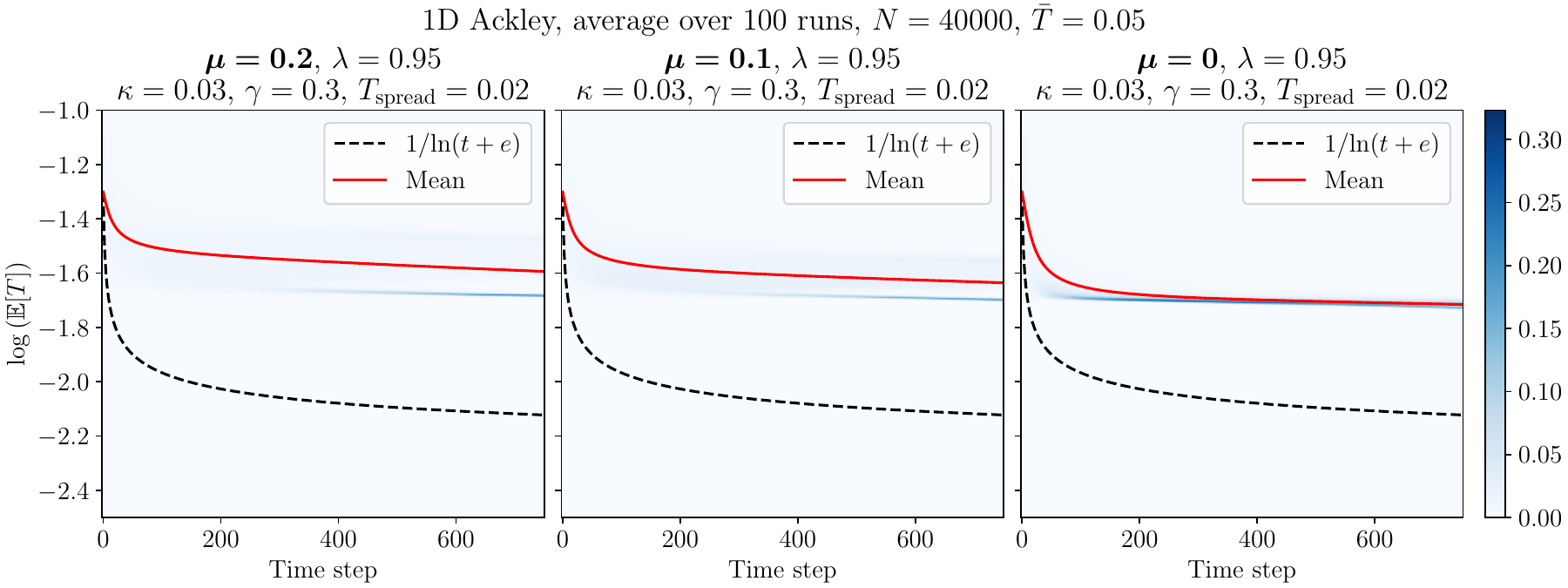}
\caption{Effect of the temperature interaction strength parameter $\mu$.}
\label{fig:mu}
\end{figure}
Shown in Figure \ref{fig:mu}, decreasing the interaction strength parameter $\mu$ leads us to the following observations: 
\begin{itemize}
\item By decreasing $\mu$, we observe an increased temperature decay. This follows from the proof of Lemma \ref{lemma:moment1}. We should indeed expect an increase in decay speed because \(\lambda-\mu\) becomes larger.
\item Looking at the histogram, we see that for small enough $\mu$, all temperatures ``collapse" on the mean. This implies that the spread in the temperatures is decreasing. Although Lemma \ref{lemma:moment2} only proves this in the absence of noise, this empirically shows that the second moment could also decay in some cases (just not uniformly).
\end{itemize}

\paragraph{Effect of temperature spread parameter $T_{\mathrm{spread}}$}

\begin{figure}[htb]
\centering
\includegraphics[width=0.95\linewidth]{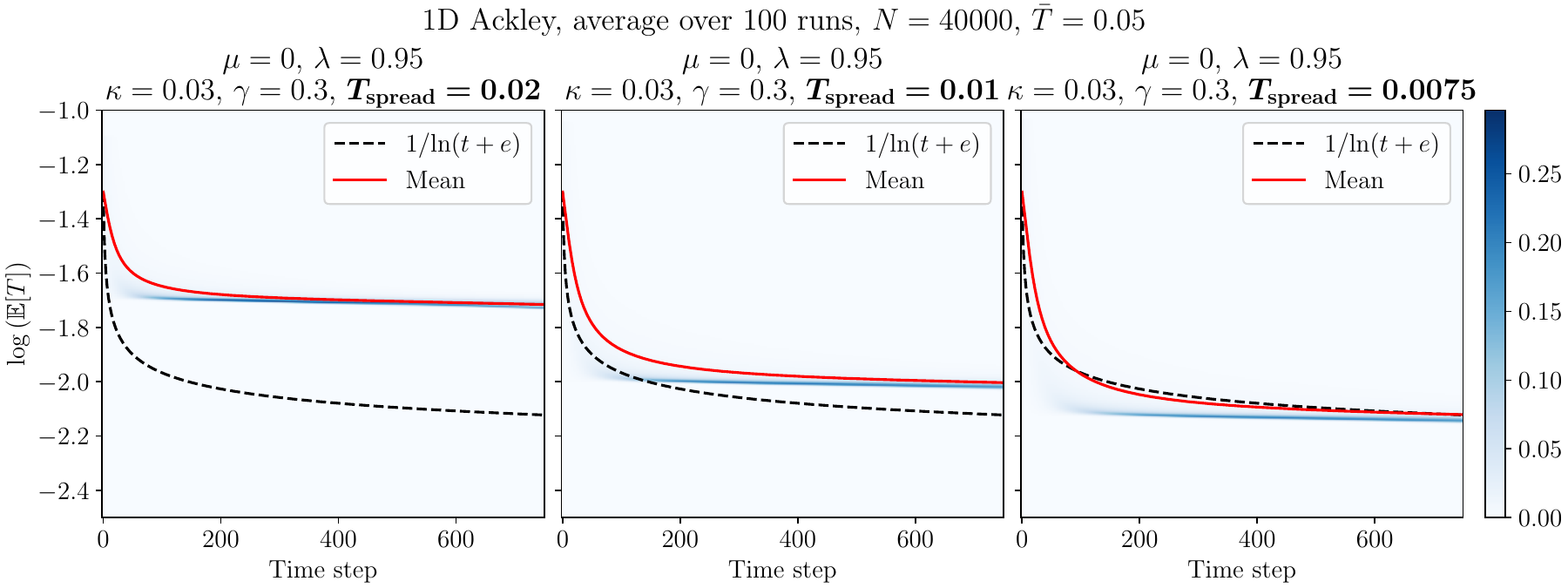}
\caption{Effect of the temperature-spread parameter $T_{\mathrm{spread}}$.}
\label{fig:tvar}
\end{figure}
As shown in Figure~\ref{fig:tvar}, decreasing the temperature-spread
parameter \(T_{\mathrm{spread}}\) leads to the following observations:
\begin{itemize}
\item Decreasing \(T_{\mathrm{spread}}\) increases the initial spread of the
temperature distribution while keeping its arithmetic mean fixed at
\(T_s\). Indeed, with the choice
\[
T_h'=2T_s-T_{\mathrm{spread}},\qquad T_l'=T_{\mathrm{spread}},
\]
one has \((T_h'+T_l')/2=T_s\). Hence, the lower mean temperature observed
during the evolution is not due to a lower initial average temperature,
but rather to the subsequent interaction dynamics. A wider initial
temperature range creates larger temperature gaps between interacting
particles, which can enhance the cooling effect generated by the
temperature exchange mechanism.

\item For \(T_{\mathrm{spread}}=0.0075\), the resulting mean-temperature
profile gives a close approximation of the logarithmic decay. Although
part of the temperature distribution lies below the reference logarithmic
schedule, a sufficient fraction of particles remains at higher
temperatures, preserving exploration while the empirical mean follows the
desired cooling trend.
\end{itemize}

\paragraph{Conclusion.}
This experiment highlights the role of the main CAST parameters in shaping 
the decay of the empirical mean temperature. Although the parameters were 
chosen by manual tuning and are not claimed to be optimal, the results show 
that CAST can produce a temperature evolution that closely follows an inverse 
logarithmic cooling schedule. This indicates that the proposed interaction 
mechanism is sufficiently flexible to recover classical cooling behaviours 
through a purely collective temperature dynamics.

\subsubsection{Geometric decay}
We now wish to approximate geometric decay, i.e. $0.999^t$. We can play the same game as before, using our acquired knowledge of the parametric effects, to find the following approximations, shown in Figure \ref{fig:geo}.

\begin{figure}[htb]
\centering
\includegraphics[width=0.7\linewidth]{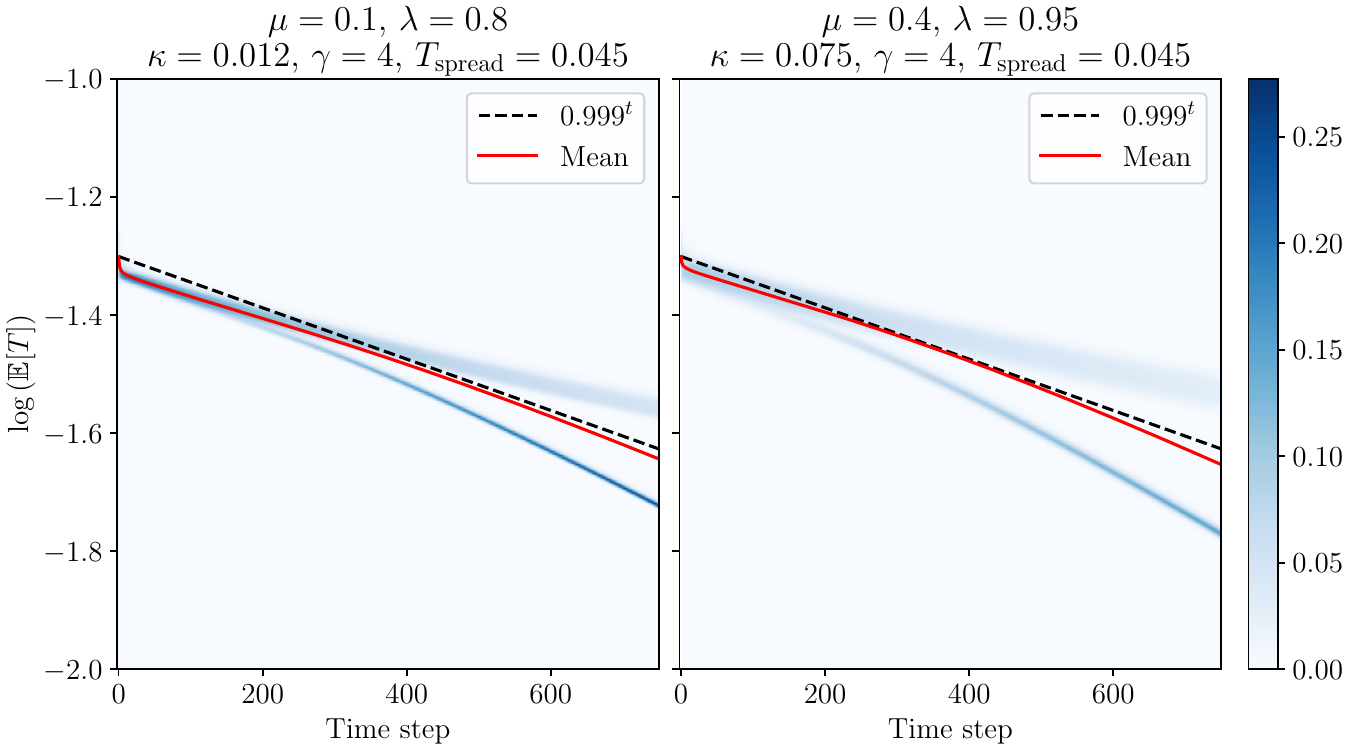}
\caption{Approximation of geometric decay.}
\label{fig:geo}
\end{figure}

Here we see two similar looking decays but with \textit{different} parameter choices. A few key observations:
\begin{itemize}
\item To achieve the rapid decay that is characteristic of geometric decay, we must increase $\gamma$ significantly compared to inverse logarithmic decay. Here, setting $\gamma=4$ means that particles will exchange temperatures four times as much compared to a ``normal" simulation.
\item On the left panel, we set a relatively small value for $\mu$ like we did with the inverse logarithmic decay. However, on the right panel, we see a similar decay with $\mu=0.4$, a significantly larger value. By increasing the value of $\mu$, we expect a slower decrease in mean temperature as shown in Lemma \ref{lemma:moment1}. To counteract this effect, we have three options:
\begin{enumerate}
\item Increase $\lambda$. As we can only increase it so much, we set it to $0.95$.
\item Increase $\kappa$. There are two reasons to increase this value. Firstly to speed up the temperature decay. Secondly, to counteract the effect of the previous increase of $\lambda$ since this reduced the support $a$. We set $\kappa$ to $0.08$.
\item Increase $\gamma$. For this experiment we did not touch this value, although one could do so.
\end{enumerate}

\end{itemize}

\subsection{Convergence}
\label{sec:conv}
Now, we want to verify whether we get similar convergence rates as SA. We increase the dimensionality to $10$ and simulate $100$ particles over $5000$ time steps. We repeat and average the simulation $100$ times to avoid statistical bias. 

On the following figures, we will display the inverse logarithmic decay of classical simulated annealing as a solid blue line on the left panel. The solid and dashed red lines are the arithmetic mean and the geometric mean of the temperatures respectively. The dot-dashed red line is the temperature of the best particle $\widehat{X}^n$. On the right upper panel, we will show the average log MSE of the best particle for both SA and CAST (see eq. (\ref{eq:best})). On the right lower panel we will show the average log MSE of the average particle position for both SA and CAST (see eq. (\ref{eq:avg})).

\paragraph{Inverse logarithmic decay} Since dimensionality affects the algorithm, we slightly tweak the values we found for the inverse logarithmic decay. We take the parameter set as follows
\begin{equation*}
(\mu,\lambda,\kappa,\gamma,T_{\mathrm{spread}})=(0,0.95,0.02,0.3,0.0075).
\end{equation*}

\begin{figure}[htb]
\centering
\includegraphics[width=0.7\linewidth]{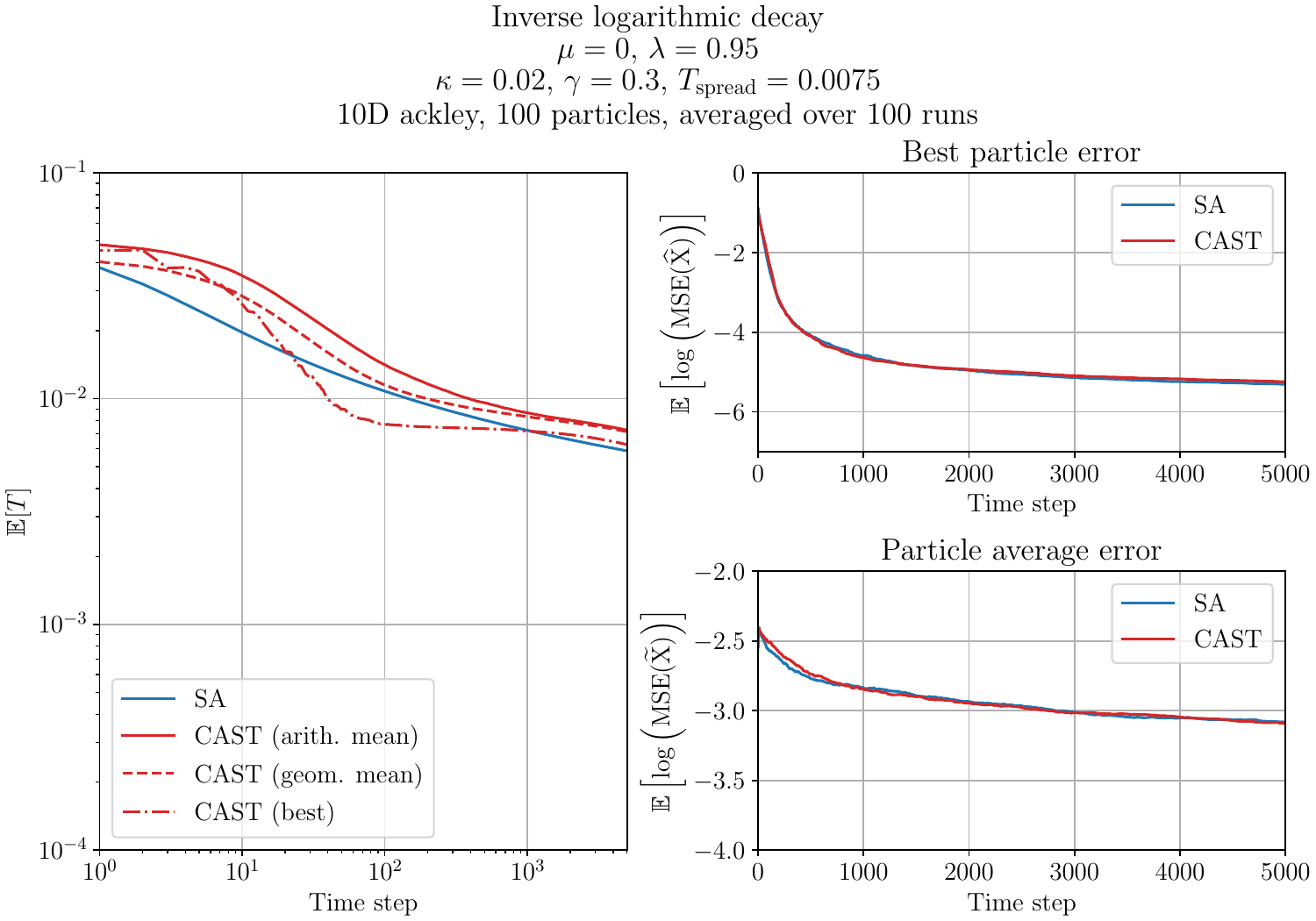}
\caption{Convergence comparison between SA with inverse logarithmic decay and CAST.}
\label{fig:log_conv}
\end{figure}

Shown in Figure \ref{fig:log_conv}, we observe that the convergence rate for both the best particle error and the particle average error is identical for SA and CAST in the case of inverse logarithmic decay. 

\paragraph{Geometric decay} Like before, we tweak the parameters somewhat due to the higher dimensionality. Starting with the parameter set of the left panel of Figure \ref{fig:geo}, we slightly increase $\kappa$ and $\gamma$. We take
\begin{equation*}
(\mu,\lambda,\kappa,\gamma,T_{\mathrm{spread}})=(0.1,0.8,0.02,4.1,0.045).
\end{equation*}

\begin{figure}[htb]
\centering
\includegraphics[width=0.7\linewidth]{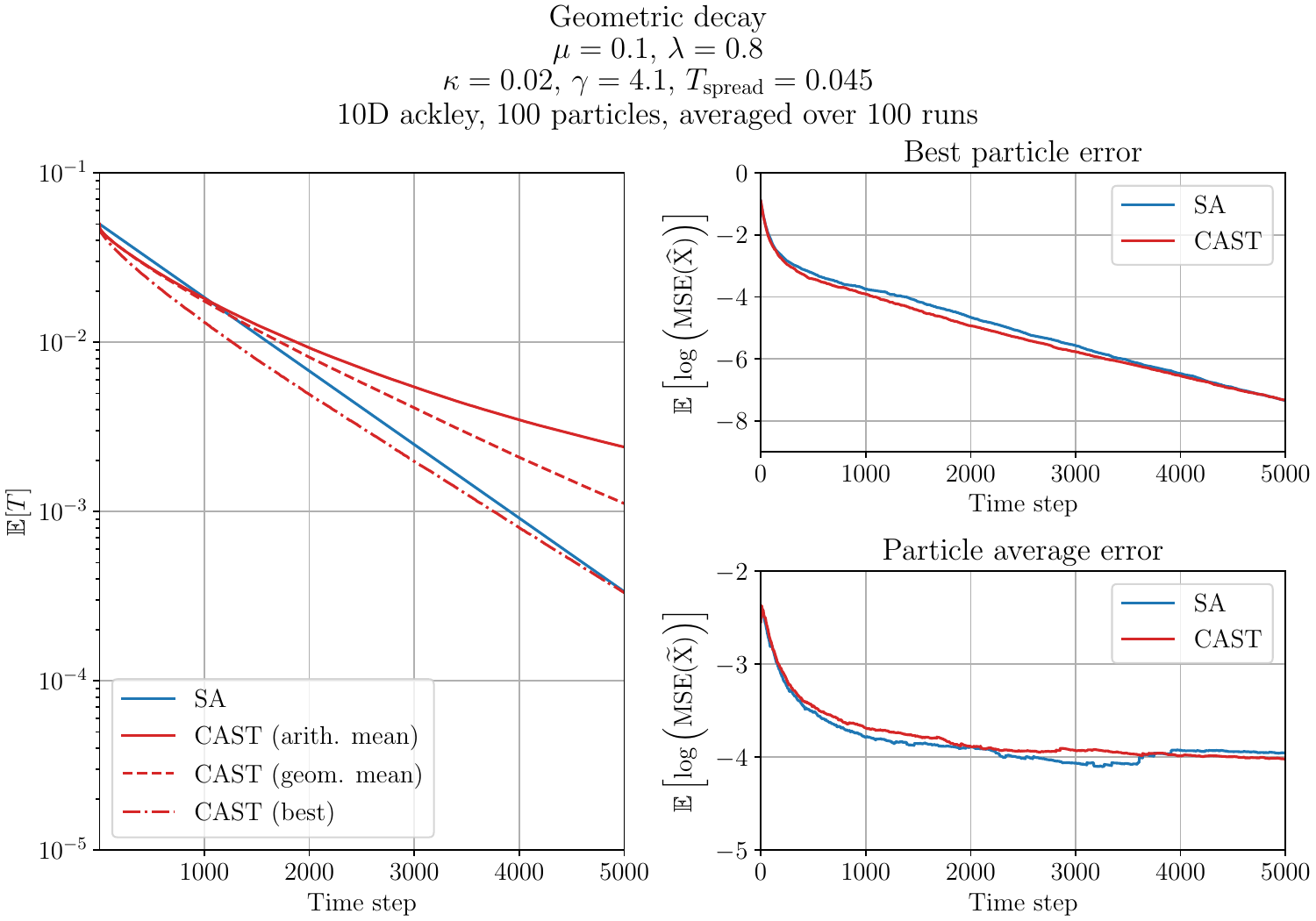}
\caption{Convergence comparison between SA with geometric decay and CAST.}
\label{fig:geo_conv1}
\end{figure}

Shown in Figure \ref{fig:geo_conv1}, we observe the following:
\begin{itemize}
\item We observe very similar empirical convergence behaviour for SA and CAST,
indicating that CAST can reproduce the effect of a geometric cooling schedule
after parameter tuning.

\item The decay of the temperature is not \textit{perfectly} geometric. We observe a slight upward trend in time. This could be mitigated by increasing $\gamma$. We did not do so because the \textit{initial} decay of CAST is slightly faster than geometric. 
\end{itemize}

The convergence plots for the other set of parameters found on the right panel of Figure \ref{fig:geo} are essentially identical and we omit them for brevity. 

\subsection{Effective parameters}

In order to use CAST efficiently, we want to find the effective parameters. Reminded by the \textit{No Free Lunch Theorem},\cite{wolpert_no_1997} we cannot expect a parameter set that will work perfectly for every test function. However, we try to find the set that ``fits best" the two test functions. To this end, we will perform four hyperparameters searches, summarized in the following tables. We group the hyperparameters in two categories: main and generalization hyperparameters. We will start by optimizing the latter and then the former since $\mu$ and $\lambda$ are arguably the most important parameters. The notation $[a,b]_{type=n}$ indicates that the $n$ parameter values are chosen between $a$ and $b$, and are either linearly or logarithmically distributed. 
\\\\
\begin{table}[htb]
\centering
\begin{tabular}[t]{l|lcccc}
\hline
\textbf{}&&$N$&$\max \ T_\mathrm{steps}$&$\mu$&$\lambda$\\
\hline
\multirow{2}{*}{Ackley}&5D&2000&$1000$&$[0,1]_{lin=21}$&$[0,1]_{lin=21}$\\
&10D&400&$1000$&$[0,1]_{lin=21}$&$[0,1]_{lin=21}$\\
\hline\hline
\multirow{2}{*}{Rastrigin}&5D&2000&$5000$&$[0,1]_{lin=21}$&$[0,1]_{lin=21}$\\
&10D&400&$20000$&$[0,1]_{lin=16}$&$[0,1]_{lin=16}$\\
\hline
\end{tabular}
\caption{Main hyperparameters.}
\end{table}
\vspace{-1.5em}
\begin{table}[htb]
\centering
\begin{tabular}[t]{l|lccc}
\hline
\textbf{}&&$T_{\mathrm{spread}}$&$\kappa$&$\gamma$\\
\hline
\multirow{2}{*}{Ackley}&5D&$[0.005, 0.01, 0.02, 0.04]$&$[0.01, 1]_{log=20}$&$[0.1, 3.16]_{log=15}$\\
&10D&$[0.005, 0.01, 0.02, 0.04]$&$[0.01, 1]_{log=20}$&$[0.1, 3.16]_{log=15}$\\
\hline\hline
\multirow{2}{*}{Rastrigin}&5D&$[0.005, 0.01, 0.02, 0.04]$&$[0.01, 1]_{log=20}$&$[0.1, 3.16]_{log=15}$\\
&10D&$[0.005, 0.01]$&$[0.01, 1]_{log=15}$&$[0.1, 3.16]_{log=10}$\\
\hline
\end{tabular}
\caption{Generalization hyperparameters.}
\end{table}
\\\\
To compare performance, we will look at two metrics:
\begin{itemize}
\item \textbf{Success rate}. If the best particle $\widehat{X}^n$ manages to reach the basin of attraction (in practice, we halve the basin radius like it is done in much of the literature), we count this as a success.
\item \textbf{Steps to basin}. The amount of steps needed for the best particle $\widehat{X}^n$ to reach the basin of attraction.
\end{itemize}

\subsubsection{Optimizing noise, interaction rate and temperature variance}

To understand the algorithm better, we want to observe the effect of each parameter separately. In order to do so, we marginalize over the other parameters. For example, if we have some metric in the form
\begin{equation*}
M(T_{\mathrm{spread}},\kappa,\gamma),
\end{equation*}
we can get an idea of the metric in function of only $T_{\mathrm{spread}}$ by computing
\begin{equation*}
M_1(T_{\mathrm{spread}})=\frac{1}{N_\kappa}\frac{1}{N_\gamma}\sum_{\kappa, \gamma}M(T_{\mathrm{spread}},\kappa,\gamma),
\end{equation*}
where $N_\kappa$ and $N_\gamma$ are the amount of samples in $\kappa$ and $\gamma$ respectively. The metric we will consider is the ``Normalised weighted average steps". To compute this, we divide the average steps by the success rate and normalize between 0 and 1. 

\begin{figure}[htb]
\centering
\includegraphics[width=0.9\linewidth]{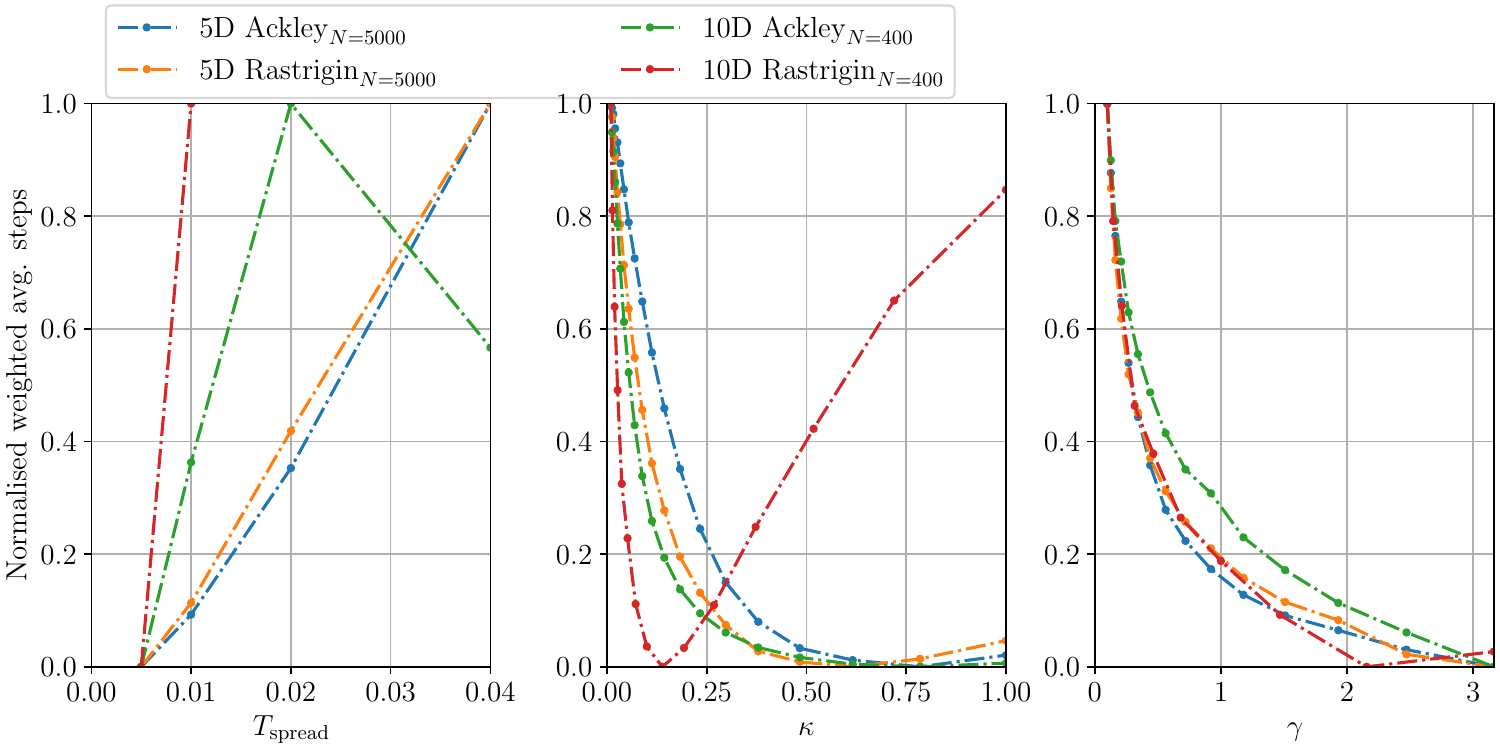}
\caption{Effects of $T_{\mathrm{spread}}$, $\kappa$ and $\gamma$ on the performance of the algorithm.}
\label{fig:norm}
\end{figure}

This gives Figure \ref{fig:norm}, where we can make the following observations:

\begin{itemize}
\item Even for different functions, dimensions and particle counts, we observe that the performance of the algorithm behaves similarly. This is evident from Figure \ref{fig:norm}. The only exception is the behaviour of 10D Rastrigin for high values of $\kappa$.

\item The effect of $\kappa$ is qualitatively similar across the test cases, with a clear turning point in each curve. For 10D Rastrigin, the turning point comes a lot earlier than in the other test cases. This is likely due to the definition of Rastrigin itself. Looking at eq. (\ref{eq:rastrigin}), the function evaluations are \textit{dependent} on $d$. This has the consequence of ``stretching" the function out in higher dimensions. This causes particles to become more easily stuck in local minima since the ``jump out" becomes more difficult. Higher values for $\kappa$ implies faster cooling, which implies that particles are expected to find the global minimizer fast. The lower effective value of \(\kappa\) indicates that slower cooling is
preferable in this more difficult high-dimensional regime.

\item Since we average over the remaining parameters to visualize each parameter
separately, these curves should be interpreted as qualitative indicators.
In practice we should take smaller values than the ones we see in Figure \ref{fig:norm}.
\end{itemize}

We are now able to choose the first three parameters:
\begin{align*}
(\kappa,\gamma,T_{\mathrm{spread}})_{5\mathrm{D}}=(0.35,2,0.005) \quad
(\kappa,\gamma,T_{\mathrm{spread}})_{10\mathrm{D}}=(0.15,1.5,0.005).
\end{align*}

\subsubsection{Optimizing the temperature interaction parameters}

After fixing the parameters $\kappa$, $\gamma$ and $T_{\text{spread}}$, we select the interaction parameters $\mu$ and $\lambda$ by a grid search based on the success rate and on the average number of steps needed to reach the basin of attraction. We display the results for the four test cases in Figures \ref{fig:mulam5d} and \ref{fig:mulam10d}.
\begin{figure}[!htb]
\centering
\begin{subfigure}{1\textwidth}
\centering
\includegraphics[width=0.7\linewidth]{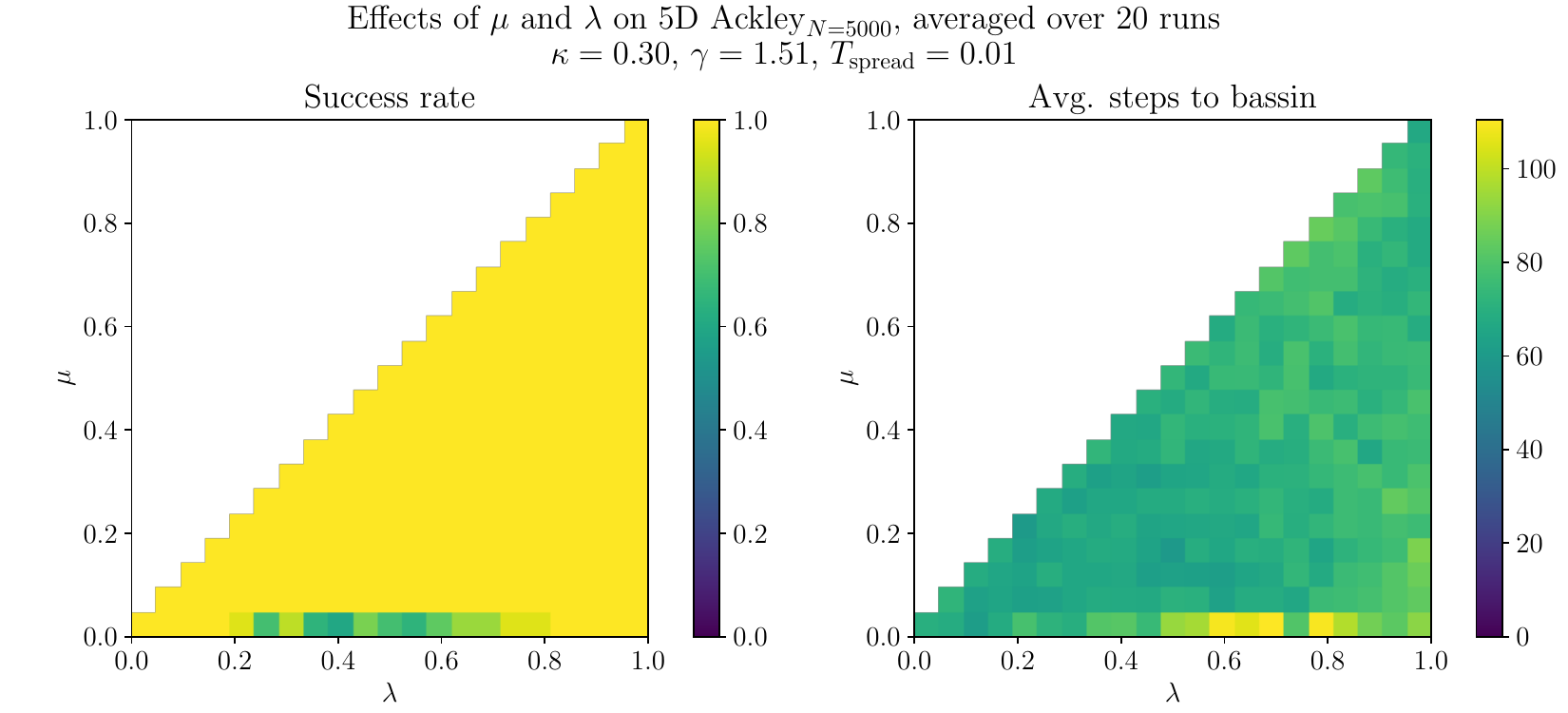}
\caption{5D Ackley}
\label{fig:ack5d}
 \end{subfigure}
 \begin{subfigure}{1\textwidth}
 \centering
\includegraphics[width=0.7\linewidth]{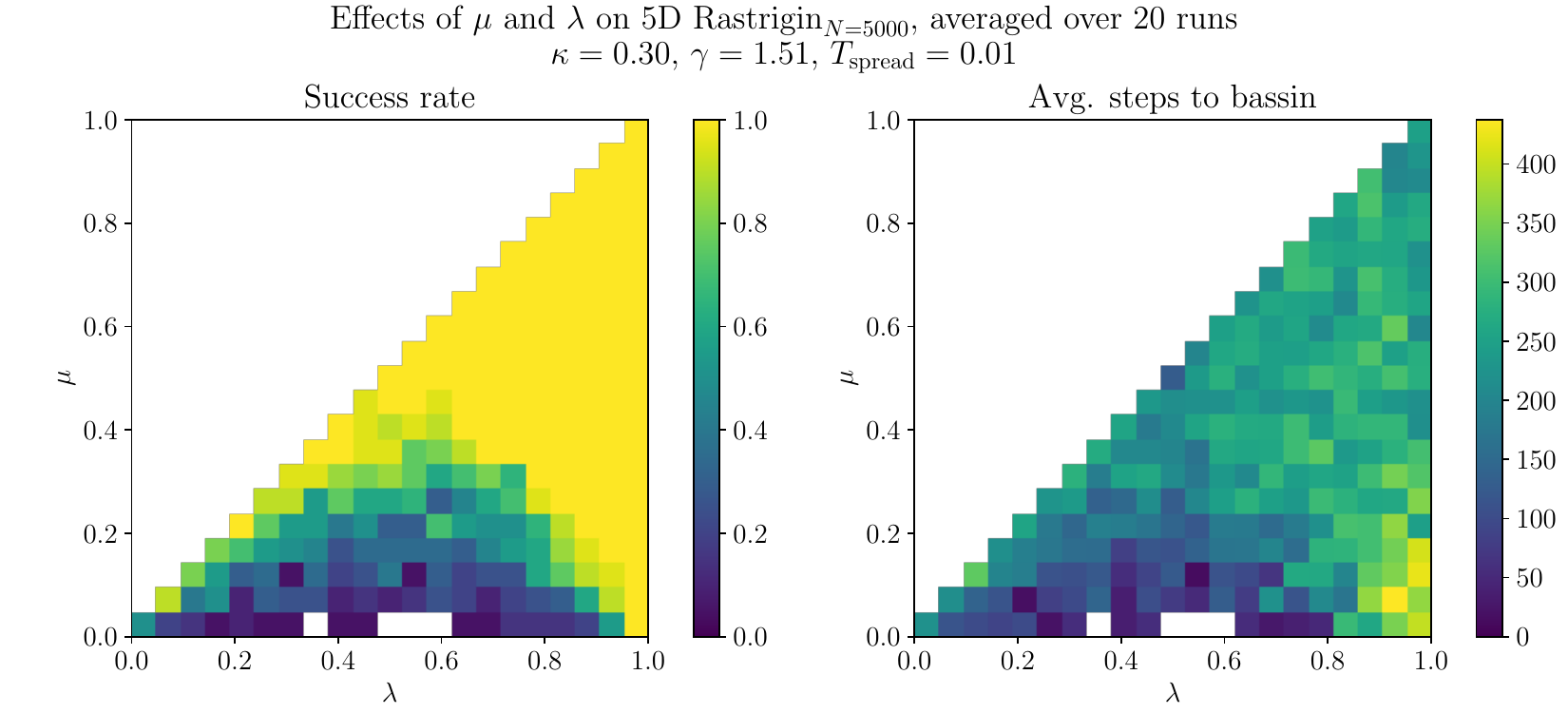}
\caption{5D Rastrigin}
\label{fig:rastr5d}
 \end{subfigure}
\caption{Effects of $\mu$ and $\lambda$.}
\label{fig:mulam5d}
\end{figure}
\begin{figure}[!htb]
\centering
\begin{subfigure}{1\textwidth}
 \centering
\includegraphics[width=0.7\linewidth]{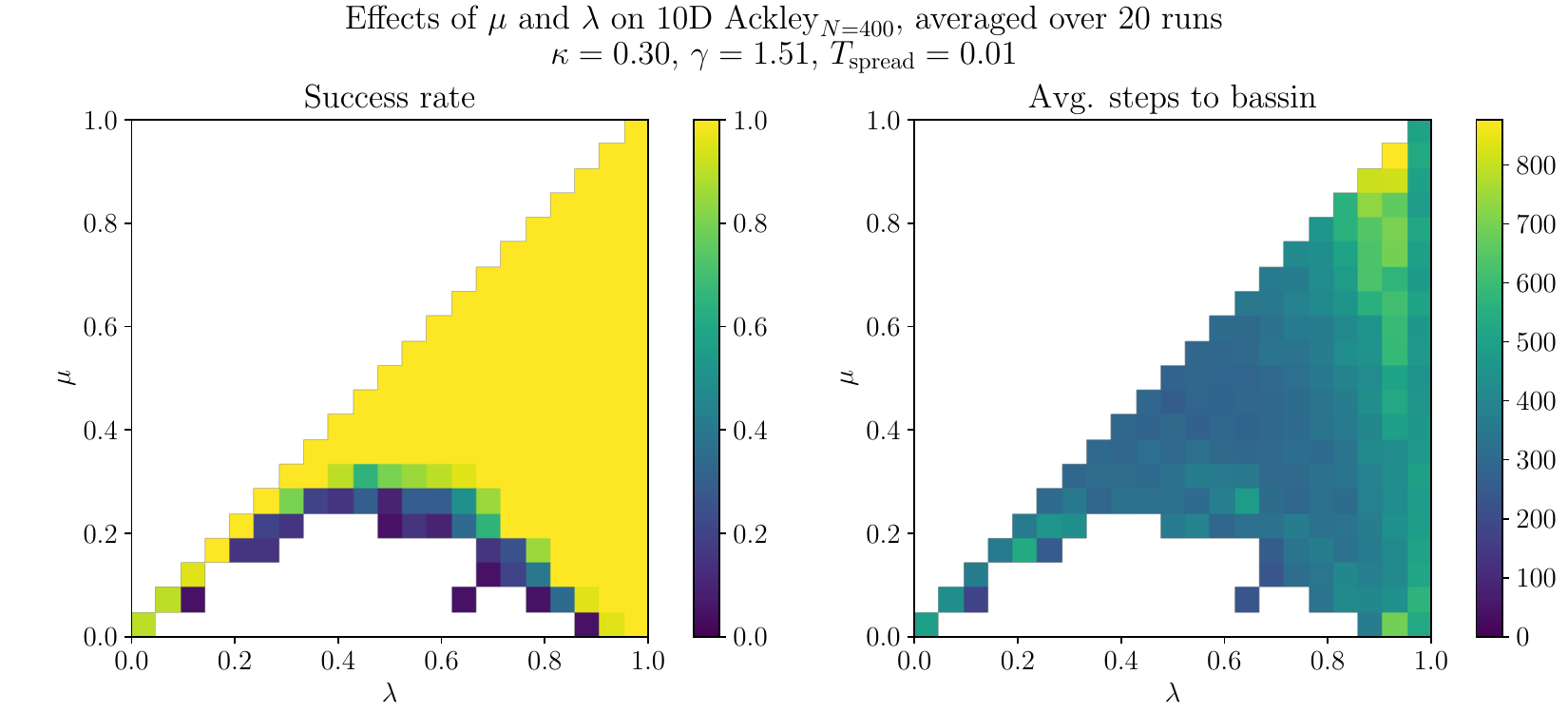}
\caption{10D Ackley}
\label{fig:ack10d}
 \end{subfigure}
 \begin{subfigure}{1\textwidth}
  \centering
\includegraphics[width=0.7\linewidth]{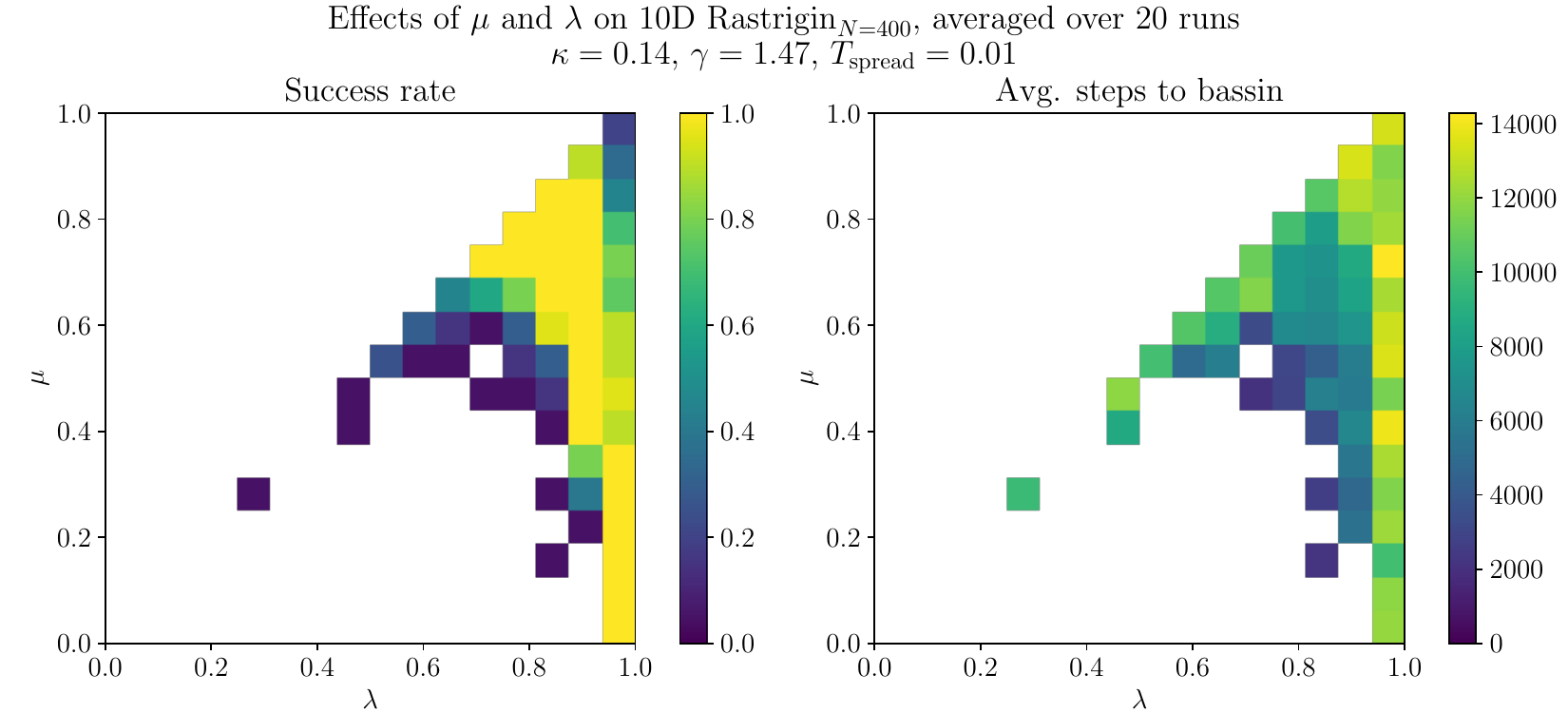}
\caption{10D Rastrigin}
\label{fig:rastr10d}
 \end{subfigure}
\caption{Effects of $\mu$ and $\lambda$.}
\label{fig:mulam10d}
\end{figure}
The spots without color correspond to a success rate of zero, i.e., a total failure \textit{or} if $\mu>\lambda$. Ideally, we want to pick a parameter set that has a success rate of 1 and a low amount of average steps. We settle on the following parameters:
\begin{align*}
5\mathrm{D}\rightarrow&(\mu,\lambda,\kappa,\gamma,T_{\mathrm{spread}})=(0.5,0.7,0.35,2,0.005)\\
10\mathrm{D}\rightarrow&(\mu,\lambda,\kappa,\gamma,T_{\mathrm{spread}})=(0.65,0.85,0.15,1.5,0.005).
\end{align*}

It shouldn't surprise that for more difficult functions in higher dimensions and with fewer particles, fewer good parameter sets exist. If we compare Figures \ref{fig:ack5d} and \ref{fig:rastr10d}, it makes sense that we have a lot more good choices for an ``easier function" in lower dimension with more particles than vice-versa.

\subsubsection{Comparison with simulated annealing}

Having selected effective hyperparameter sets, we can compare CAST to classical SA. We will test two scenarios:
\begin{itemize}
\item $5\mathrm{D}$ optimization with $2000$ particles. This is an ``intermediate" dimensional problem with many particles.
\item $10\mathrm{D}$ optimization with $400$ particles. This is a ``higher" dimensional problem with relatively few particles (especially for that dimensionality).
\end{itemize}
For each scenario, we run the optimization algorithms over $100$ independent runs to reduce
Monte Carlo fluctuations.

\paragraph{5D comparison} We present the expected log MSE of SA and CAST's best particle in each time step in  Figure \ref{fig:5dconvcomp}. On the left panel for Ackley and on the right panel for Rastrigin.

\begin{figure}[!htb]
\centering
\begin{subfigure}[b]{1\textwidth}
\centering
\includegraphics[width=0.75\linewidth]{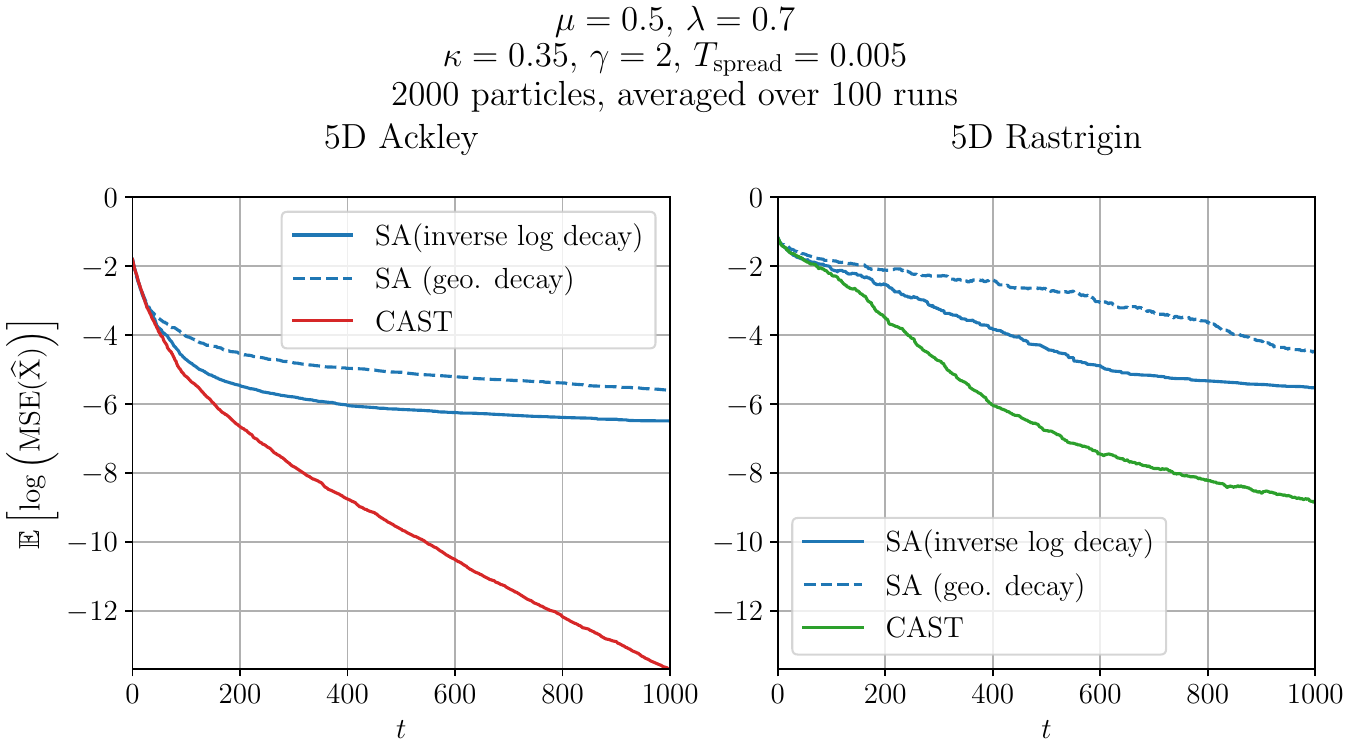}
\caption{Convergence of the expected log MSE.}
\label{fig:5dconvcomp}
\end{subfigure}
\begin{subfigure}[b]{1\textwidth}
\centering
\includegraphics[width=0.75\linewidth]{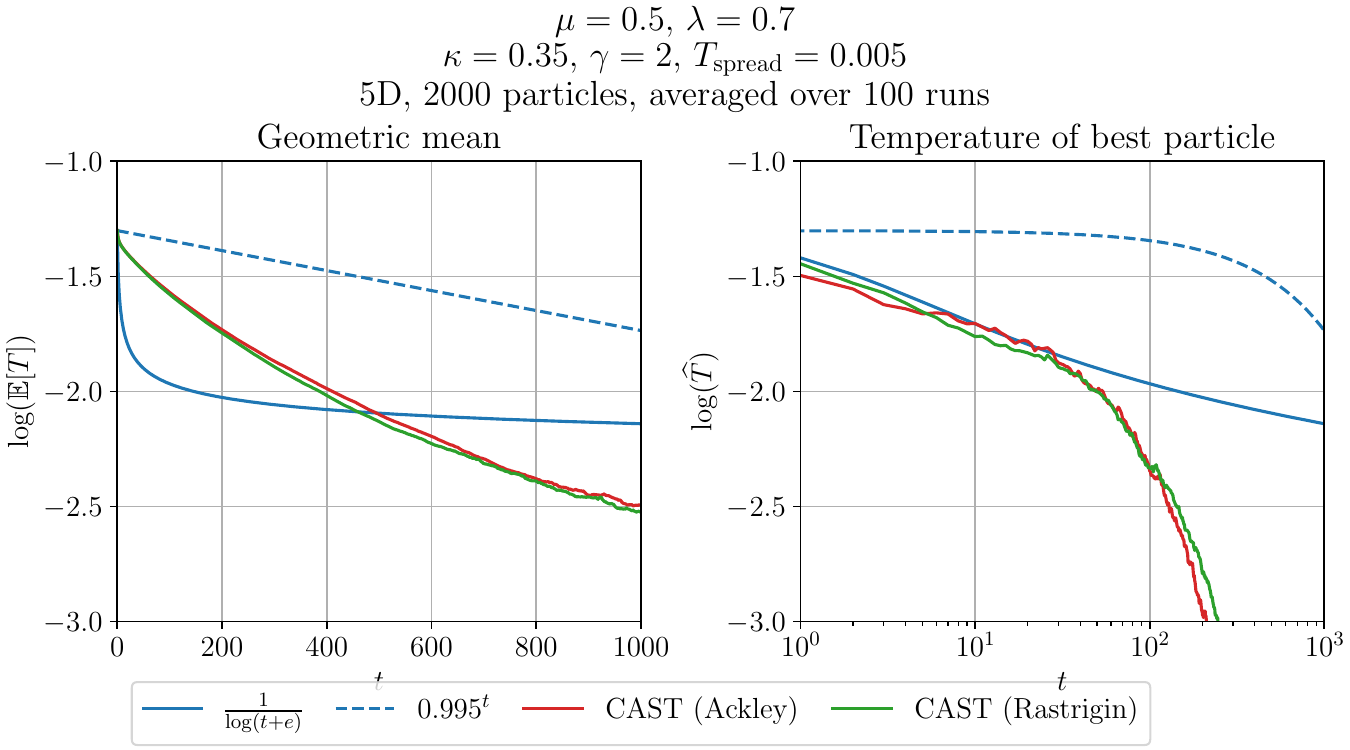}
\caption{Temperature decay.}
\label{fig:5dtemp}
\end{subfigure}
\caption{5D comparison}
\end{figure}

Our algorithm clearly converges much faster than classical SA for  both test functions, but especially for Rastrigin. It is interesting to see that the error curves for SA and CAST lie quite close during the initial optimization phase after which they diverge. On Figure \ref{fig:5dtemp}, we show the temperature curves. As reference, we will also include the shapes of inverse logarithmic and geometric decays with the solid and dashed blue lines. We observe the following:
\begin{itemize}
\item On the left panel, we see the mean temperature curves for both Ackley and Rastrigin follow each other closely. This seems to suggest that the decay is relatively insensitive to the underlying function itself.
\item On the right panel (switching to a loglog plot), we observe that the best particle's temperature initially follows an inverse logarithmic decay before accelerating and following something that is more geometric in nature. This seems to suggest that the algorithm ``adapts'' during the simulation. It is likely that this acceleration occurs when the best particle enters the basin of attraction.
\end{itemize}

\paragraph{10D comparison} \textbf{}
\begin{figure}[!htb]
\centering
\begin{subfigure}[b]{1\textwidth}
\centering
\includegraphics[width=0.75\linewidth]{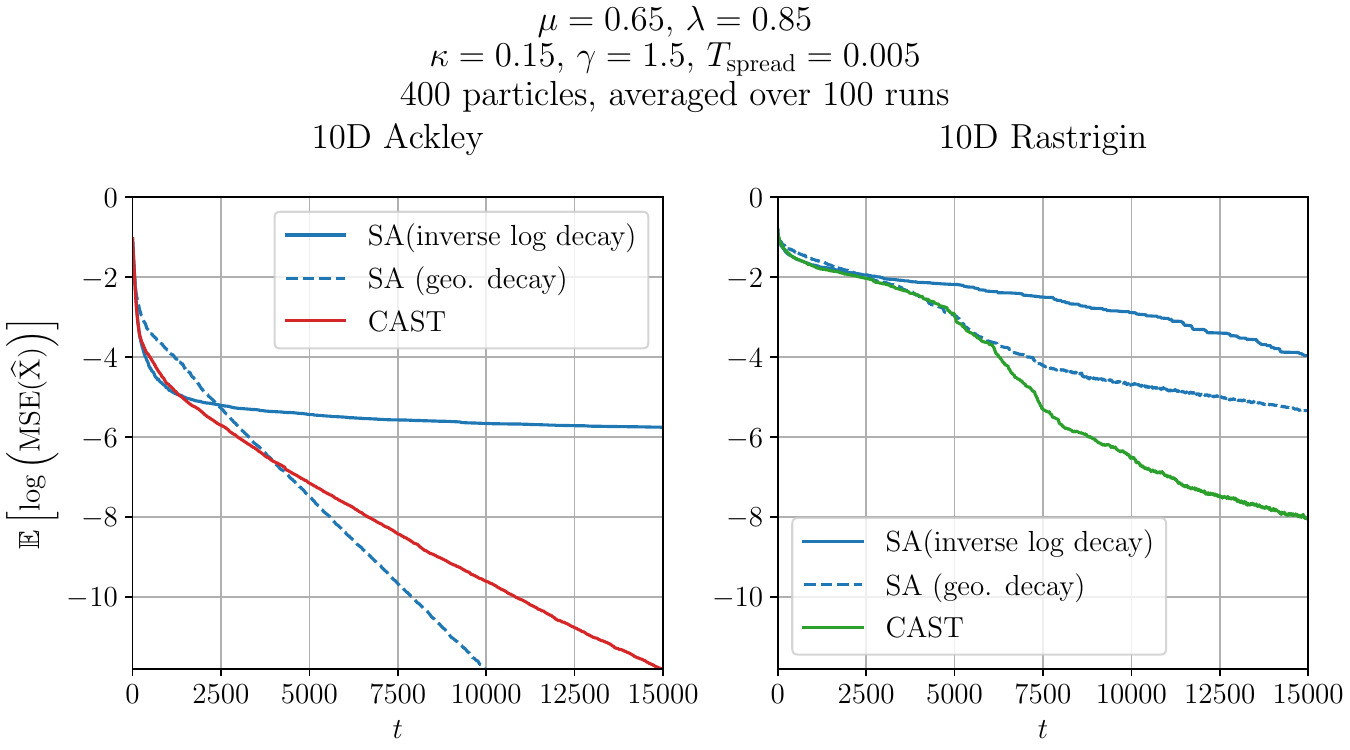}
\caption{Convergence of the expected log MSE.}
\label{fig:10dconvcomp}
\end{subfigure}
\begin{subfigure}[b]{1\textwidth}
\centering
\includegraphics[width=0.75\linewidth]{b451eaee5ad060629fffed390f2a7bd1_conv_comp.pdf}
\caption{Temperature decay.}
\label{fig:10dtemp}
\end{subfigure}
\caption{10D comparison}
\end{figure}
Interestingly, while we observe similar characteristics for this scenario as in the previous one, we notice that SA with geometric decay converges faster for 10D Ackley than CAST. This suggests that Ackley allows faster cooling. However, it does not outperform CAST for 10D Rastrigin. On Figure \ref{fig:10dtemp}, we look at the temperature decay where we observe the following:
\begin{itemize}
\item The means of the temperatures for Ackley and Rastrigin follow each other like before. However, we see that they diverge near the end of the optimization run. The interpretation here would be as follows: since Rastrigin is a more difficult optimization problem with many local minima (lying close to the global one), there is a need for more ``warm'' particles exploring the search space.
\item On the right panel, looking at the temperature of the best particle, we observe even stronger evidence for the claim we made earlier in the 5D test case. We suggested that during the initial phase of the run, the best temperature seemed to follow an inverse logarithmic decay while after some time, the temperature starts decaying geometrically. We observe the same phenomenon here.
\end{itemize}

\section{Concluding remarks}
In this work, we have introduced and studied a novel framework for simulated annealing based on kinetic theory and collective temperature dynamics. Inspired by particle interactions in Boltzmann-type models and the principles of parallel tempering, our approach constructs a stochastic mechanism through which particles interact by exchanging temperatures according to a suitably designed binary interaction. This formulation leads to a linear kinetic model for simulated annealing of the type introduced in^^>\cite{pareschi_optimization_2024} coupled with a nonlinear Boltzmann--Povzner equation for the temperature exchanges.
%, offering a flexible and analytically tractable model for optimization with collective cooling.
The method enables a dynamic adaptation of the temperature distribution that is emergent rather than externally prescribed, and we have shown that this interaction-driven mechanism yields a monotone decay of
the expected mean temperature under suitable assumptions on the interaction
parameters. The numerical experiments support the use of CAST as an adaptive
collective cooling strategy for global optimization.

This perspective complements and extends other recent kinetic approaches to simulated annealing, such as those based on entropy-controlled cooling^^>\cite{herty_kinetic_2026}, and opens several promising directions for future research. In particular, the introduction of a vector-valued temperature could allow for anisotropic exploration of the state space, where different directions or features of the problem are cooled at different rates. This would be especially useful in high-dimensional or stiff problems, where energy landscapes vary strongly across directions.

Furthermore, the generality of the kinetic formulation suggests potential applications beyond classical optimization. For instance, collective thermostatting mechanisms could be fruitfully integrated into stochastic gradient Langevin dynamics (SGLD) or variational inference frameworks in machine learning, where adaptive control of exploration and convergence plays a central role. Finally, extending the model to allow for spatially structured particle systems or interacting agent networks may yield insights into distributed optimization and decentralized learning algorithms.

\subsection*{Acknowledgments}
The authors acknowledge financial support from the European Union’s Horizon Europe research and innovation
program under the Marie Sklodowska-Curie Doctoral Network DataHyking (Grant No. 101072546).
FB acknowledges the support of the Mathematics Department of the University of Ferrara
through access to their computational resources. The research of LP has been supported by the Royal
Society under the Wolfson Fellowship “Uncertainty quantification, data-driven simulations and learning
of multiscale complex systems governed by PDEs”. LP also acknowledges the partial support by Fondo
Italiano per la Scienza (FIS2023-01334) advanced grant ADAMUS. This work has been written within
the activities of GNCS group of INdAM (Italian National Institute of High Mathematics).

\bibliographystyle{plain}
\bibliography{../references}

\end{document}